\newtheorem{theorem}{Theorem}[section]
\newtheorem*{theorem*}{Theorem}
\theoremstyle{plain}
\newtheorem{corollary}[theorem]{Corollary}
\newtheorem{lemma}[theorem]{Lemma}
\newtheorem{proposition}[theorem]{Proposition}
\theoremstyle{definition}
\newtheorem{definition}[theorem]{Definition}
\newtheorem*{remark}{Remark}
\newtheorem{problem}[theorem]{Problem}
\newcommand{\CC}{\mathbb{C}}
\newcommand{\RR}{\mathbb{R}}
\newcommand{\calH}{\mathcal{H}}
\newcommand{\calK}{\mathcal{K}}
\DeclareMathOperator{\Val}{Val}
\DeclareMathOperator{\vol}{vol}
\DeclareMathOperator{\Grass}{Gr}
\DeclareMathOperator{\Klain}{Kl}
\newcommand{\largewedge}{\mbox{\Large $\wedge$}}
\DeclareMathOperator{\conv}{conv}
\DeclareMathOperator{\pos}{pos}
\DeclareRobustCommand{\pder}[1]{%
  \@ifnextchar\bgroup{\@pder{#1}}{\@pder{}{#1}}}
\newcommand{\@pder}[2]{\frac{\partial#1}{\partial#2}}
\newcommand{\der}[2][]{\frac{d#1}{d#2}}
\title[]{On the  extendability by continuity of angular valuations on  polytopes}
\author{Thomas Wannerer}
\email{thomas.wannerer@uni-jena.de}
\address{Fakult\"at f\"ur Mathematik und Informatik, Friedrich-Schiller-Universit\"at Jena, 07743 Jena, Germany}
\date{\today}
\subjclass[2010]{52A20, 52A39 (primary), 52B45, 52B11  (secondary)}
\thanks{Supported by DFG grant WA 3510/1-1.}
\begin{document}

\begin{abstract}
A classical theorem of P.~McMullen describes all valuations on polytopes that are 
invariant under translations and  weakly continuous, i.e., continuous with respect to parallel displacements of the facets of a polytope. While it is typically  not difficult to check that a valuation is weakly continuous, it is not  clear how to decide whether it admits a continuous extension to convex bodies.
In a special case of McMullen's construction a simple necessary and sufficient condition on the initial data of such an extension is obtained. 
\end{abstract}

\maketitle

\section{Introduction}

A  function on the space of  convex bodies in  $\RR^n$  
is called a valuation if it satisfies 
\begin{equation}\label{eq:add}\mu(K\cup L) = \mu(K)+\mu(L)-\mu(K\cap L)\end{equation}
 whenever $K\cup L$ is convex. The prime examples of valuations on convex bodies are the intrinsic volumes, but also many other of the most central constructions in Convex Geometry are valuations. Remarkably, in many  cases \eqref{eq:add}  together with a short list of inevitable properties determines these constructions uniquely. The seminal result in this direction is Hadwiger's characterization of the intrinsic volumes \cite{Hadwiger:Vorlesungen}; see, e.g., \cite{HaberlParapatits:Surface,Ludwig:Ellipsoids,Ludwig:Minkowski,Ludwig:Intersection,LudwigReitzner:Classification,SchusterW:Generalized,Schneider:AreaMeasures,Schneider:CurvatureMeasures} for results in similar spirit. The recent work of Alesker on valuations \cite{Alesker:Irreducibility,Alesker:Lefschetz,Alesker:Product,Alesker:Survey,Alesker:VMfdsIG,Alesker:Fourier} completely transformed this classical line of research by introducing numerous new tools and  opening up  new directions that extend well  beyond Convexity.

The most important examples of valuations on convex bodies are usually invariant under translations and continuous with respect to the Hausdorff metric. The restriction of  such a valuation to polytopes   satisfies \eqref{eq:add} for polytopes, is invariant under translations, and is  weakly continuous, i.e., continuous with respect to parallel displacements of the facets of a polytope. While it is typically  not difficult to  check that a valuation is weakly continuous, it is not clear how to decide whether it admits a continuous extension to convex bodies  (see, e.g., \cite{alesker:extendability,hhw}, \cite[Problem 2.3.13]{fu:barcelona}, and also \cite{HugSchneider:LocalTensor,HugSchneider:RotationCovariant} where this is a key issue):

\begin{problem}
Specify (simple necessary and sufficient) conditions under which a translation-invariant, weakly continuous valuation on polytopes admits a continuous extensions to  convex bodies.
\end{problem}

\begin{remark}
Since polytopes are a dense subset, an extension, if it exists, is unique. Moreover, by a theorem of Groemer \cite{Groemer:Extension}, a continuous extension is automatically  a valuation. 
\end{remark}

\subsection{Statement of the theorem}
According to McMullen \cite{mcmullen:weakly}, every translation-invariant, weakly continuous valuation on polytopes can be obtained by a fairly explicit geometric construction. A special case of this construction is the following. 
Let $F_{k,k+1}$ be the manifold of pairs $(E,\ell)$ where $E\subset \RR^n$ is a linear subspace of dimension $k$ and $\ell \subset \RR^n$ is a $1$-dimensional linear subspace orthogonal to $E$. For every continuous function $h\colon F_{k,k+1}\to \CC$ define
\begin{equation}\label{eq:muf} \mu_h(P)= \sum \int_{\ell \subset N_FP } h(\overline F, \ell ) \; d\ell, \end{equation} 
where  $P\subset \RR^n$ is a polytope, the sum extends over all $k$-dimensional faces $F$ of $P$, $\overline F$ is the translate of the affine hull  of $F$ containing the origin,  $N_FP$ is the normal cone of $P$ at the face $F$, $d\ell$ is the rotation-invariant probability measure on the space of rays in $\overline{F}^\perp$ emanating from the origin, and 
$h(\overline F, \cdot )$ is considered as an even function on rays in $\overline{F}^\perp$. By the easy part of McMullen's theorem \cite{mcmullen:weakly}, $\mu_h$ is a translation-invariant, weakly continuous valuation on polytopes.

It may sound surprising, but, as was shown by Alesker \cite{alesker:extendability},  there exist smooth functions $h$ such that $\mu_h$ does not admit  a continuous extension to convex bodies. Later, several  authors arrived at the same conclusion in very different ways  \cite{af:lorentz,hhw,pw:klain}. In the special case $n=3$ and $k=1$ Alesker~\cite{alesker:extendability} described a subspace of smooth sections  of a certain line bundle over $F_{1,2}$ that can be identified with those functions $h$ for which a continuous extensions is possible.  A sufficient condition for all $n$ and $k$ was found by Hinderer, Hug, and Weil \cite{hhw}: if $h$ lies in the image of a certain integral transform, then $\mu_h$ extends by continuity. 

The simplest non-trivial case of the construction \eqref{eq:muf} is when the function $h$ does not depend on $\ell$. In this situation, the integrals are proportional to the external angle $\gamma(F,P)$ of the  polytope $P$ at the face $F$,  
\begin{equation}\label{eq:angular}\mu_{\pi^* f}(P) = \sum f(\overline{F}) \gamma(F,P) \vol_k(F),\end{equation}
where  $\pi \colon F_{k,k+1}\to \Grass_k(\RR^n)$, $(E,\ell)\mapsto E$ is the projection to the Grassmannian and 
$f\colon \Grass_k(\RR^n)\to \CC$.
If $f\equiv 1$ one obtains the intrinsic volumes. Other important examples of such valuations that admit a continuous extension are  the Hermitian intrinsic volumes arising in Hermitian Integral Geometry \cite{hig} or more generally the constant coefficient valuations (see, e.g., \cite{hig,fu:barcelona}). Moreover,  valuations of type \eqref{eq:angular} are of interest in connection with the Angularity Conjecture \cite{bfs,w:angularity}.

The problem of characterizing those functions $f$ for which \eqref{eq:angular} admits a continuous extension  was explicitly stated by Fu   \cite[Problem 2.3.13]{fu:barcelona}.  Although \eqref{eq:angular} is arguably the simplest possible instance of \eqref{eq:muf}, the results of \cite{alesker:extendability,hhw} provide almost no information on this problem.   The purpose of this note is to resolve this situation by specifying a simple necessary and sufficient condition under which a continuous extension exists.

Throughout this article we assume that $n\geq 3$. 
\begin{theorem}\label{thm:main}
 Let $1\leq k <n-1$ and  $f\colon \Grass_k(\RR^n)\to \CC$. Then $\mu_{\pi^* f}$ admits a continuous extension to convex bodies if and only if $f$ is the restriction of a $2$-homogeneous polynomial to the image of the Pl\"ucker embedding. 
 Consequently, the space of such valuations has dimension
$$\frac{1}{n-k+1}\binom{n}{k}\binom{n+1}{k+1}$$
and coincides with the space of constant coefficient valuations.
\end{theorem}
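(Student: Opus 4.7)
The plan is to handle the ``if'' and ``only if'' directions separately. I would prove the ``if'' direction by constructing an explicit constant coefficient valuation whose restriction to polytopes equals $\mu_{\pi^*f}$; the ``only if'' direction by an $O(n)$-equivariant analysis showing that only those $O(n)$-isotypic components of $C^\infty(\Grass_k(\RR^n))$ lying in the degree-$2$ polynomial subspace can contribute to extendable angular valuations.

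For the ``if'' direction, let $p$ be a $2$-homogeneous polynomial on $\bigwedge^k\RR^n$ with $f = p|_{\Grass_k(\RR^n)}$. Polarize $p$ to a symmetric bilinear form $B$, and use $B$ together with the Euclidean structure to manufacture a translation-invariant smooth $(n{-}1)$-form $\omega_p$ on the sphere bundle $\RR^n\times S^{n-1}$ of bidegree $(k,n{-}k{-}1)$ with the following defining property: for every $\nu\in S^{n-1}$, every $k$-plane $E\subset\nu^\perp$ with unit $k$-vector $\xi_E$, and orthonormal frames $(v_1,\ldots,v_k)$ of $E$ and $(w_1,\ldots,w_{n-k-1})$ of $E^\perp\cap\nu^\perp$, the value $\omega_{p,\nu}(v_1,\ldots,v_k;w_1,\ldots,w_{n-k-1})$ equals $p(\xi_E)$. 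Such an $\omega_p$ is built tensorially from $B$ via Hodge duality on $\nu^\perp$; $2$-homogeneity of $p$ is what makes the coefficients smooth in $\nu$. The resulting constant coefficient valuation $\tilde\mu(K):=\int_{\nc(K)}\omega_p$ is continuous on the whole of $\mathcal{K}(\RR^n)$, and a face-by-face evaluation on a polytope $P$, using the standard decomposition of its normal cycle into pieces $[F]\times[N_FP\cap S^{n-1}]$, reproduces $\sum_F f(\bar F)\gamma(F,P)\vol_k(F)=\mu_{\pi^*f}(P)$. The identification of extendable angular valuations with constant coefficient valuations is then built into the construction.

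For the ``only if'' direction, suppose $\mu_{\pi^*f}$ admits a continuous extension $\tilde\mu$. Evaluating $\tilde\mu$ on a $k$-dimensional body in a $k$-plane $E$ identifies $f$ as the Klain function of $\tilde\mu$, so $f$ is at least continuous. The subspace $\mathcal{E}\subset C^\infty(\Grass_k(\RR^n))$ of extendable $f$'s is $O(n)$-invariant (because the polytope formula defining $\mu_{\pi^*f}$ and the Hausdorff topology on $\mathcal{K}(\RR^n)$ are both $O(n)$-equivariant), as is the subspace $\mathcal{P}$ of restrictions of $2$-homogeneous polynomials; the ``if'' direction gives $\mathcal{P}\subset\mathcal{E}$. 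To prove equality I would decompose both into $O(n)$-isotypic components via Peter--Weyl: classically $\mathcal{P}$ is the irreducible $GL(n)$-module with highest weight $(2^k,0^{n-k})$, that is, the degree-two part of the homogeneous coordinate ring of the Pl\"ucker-embedded Grassmannian, of dimension $\frac{1}{n-k+1}\binom{n}{k}\binom{n+1}{k+1}$ by the Weyl dimension formula. For each $O(n)$-isotypic component of $C^\infty(\Grass_k(\RR^n))$ \emph{not} contained in $\mathcal{P}$, I would exhibit an explicit $f$ in that component together with two polytope sequences $P_\varepsilon, Q_\varepsilon\to K$ along which $\mu_{\pi^*f}$ converges to distinct limits, witnessing non-extendability. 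This isotypic rejection step is the principal obstacle: it is what converts the qualitative Hausdorff continuity hypothesis into the sharp algebraic condition that $f$ is a degree-$2$ polynomial in Pl\"ucker coordinates.
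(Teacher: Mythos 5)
Your overall architecture (the ``if'' direction via constant coefficient valuations, the ``only if'' direction via an $O(n)$-isotypic analysis of Klain functions) matches the paper's, but both halves of your proposal leave the genuinely hard step unproved. In the ``if'' direction, the assertion that a face-by-face evaluation of $\int_{N(P)}\omega_p$ over the pieces $[F]\times[N_FP\cap S^{n-1}]$ ``reproduces $\sum_F f(\overline F)\gamma(F,P)\vol_k(F)$'' is precisely the nontrivial point: for a generic translation-invariant form of bidegree $(k,n-k-1)$ the integral over $N_FP\cap S^{n-1}$ depends on the full shape of the normal cone, not merely on its solid angle, so the resulting valuation need not be \emph{angular}. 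That constant coefficient valuations are angular is a theorem (the paper invokes \cite[Theorem 1.4]{w:angularity} for it), not a computation you can wave through.

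In the ``only if'' direction you explicitly defer the ``isotypic rejection step,'' calling it ``the principal obstacle''---but that step \emph{is} the theorem, and your proposed mechanism (for each bad isotypic component, exhibit two polytope sequences with distinct limits of $\mu_{\pi^*f}$) is not carried out and is not obviously executable: one would have to do this for infinitely many components of $C(\Grass_k(\RR^n))$ simultaneously and for all $k$. The paper's route is different and more concrete: it first reduces to $k=n-2$ by restricting $\mu$ to $(k+2)$-dimensional subspaces (Lemma~\ref{lemma:res} and Corollary~\ref{corollary:induction}); then, after passing to smooth valuations (which requires the density Lemma~\ref{lemma:smooth}, also missing from your argument), it extracts a single scalar functional equation \eqref{eq:relation} on the Klain function by differentiating $\mu$ along a family of simplices with a degenerating vertex and averaging over sign flips of the basis vectors to kill the non-angular contributions of the normal cycle representation (Lemmas~\ref{lemma:comp1} and~\ref{lemma:comp2}); finally it tests this one identity on Strichartz's explicit highest weight vectors $f_{m_1,m_2}$ and rules out $m_1>1$ by comparing leading trigonometric coefficients. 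Without some concrete necessary condition of this kind, your plan does not convert continuity into the algebraic constraint on $f$, so as it stands the proposal is a correct outline with the central argument missing.
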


\begin{remark} As was already observed in \cite{alesker:extendability} for $k=n-1$ there is no condition on $f$ except continuity.
Theorem~\ref{thm:main} directly implies the classification of angular curvature measures that has been recently obtain by the author \cite{w:angularity}.
\end{remark}

Let us clarify what we mean by restriction to the image of the Pl\"ucker embedding in the statement of theorem. The Pl\"ucker embedding  is a map \begin{equation}\label{eq:pluecker}\psi\colon \widetilde \Grass_k(\RR^n) \to \largewedge^k \RR^n\end{equation}
from the Grassmannian of oriented $k$-planes in $\RR^n$ into  the $k$th exterior power of $\RR^n$. If $u_1,\ldots, u_k$ is a positively oriented orthonormal basis of $E$, then 
$$\psi(E)= u_1\wedge \cdots \wedge u_k.$$
We call a function on the oriented Grassmannian even if it is invariant under change of orientation; even  functions on the oriented Grassmannian correspond bijectively to functions on the Grassmannian $\Grass_k( \RR^n )$ of (unoriented) linear subspaces of $\RR^n$. In particular one may restrict even functions on  $\largewedge^k \RR^n$
to the image of the Pl\"ucker embedding to obtain functions on the Grassmannian $\Grass_k(\RR^n)$.

\subsection{Outline of the proof}

 We conclude this introduction with a plan of the proof of Theorem~\ref{thm:main}. We denote by $\Val_k^+=\Val^+_k(\RR^n)$ the space of $k$-homogeneous, continuous valuations on convex bodies in $\RR^n$ that are invariant under translations and even (i.e., invariant under reflections in the origin).

\begin{definition}\label{def:angular}
 A valuation $\mu\in \Val_k^+$ is a called \emph{angular} if 
 $$\mu(P)= \sum \Klain_\mu(\overline F) \gamma (F,P) \vol_k(F)$$
 for every polytope $P\subset \RR^n$. Here the sum is over the $k$-dimensional faces $F$ of $P$,
 $\Klain_\mu\colon \Grass_k(\RR^n)\to \CC$ is the Klain function of $\mu$  (see Section~\ref{sec:prelim} for its definition),
 $\overline F$ is the translate of the affine hull of $F$ that contains the origin, and $\gamma(F,P)$ is the external angle of $P$ at $F$.  The subspace of angular valuations is denoted by $A_k\subset \Val_k^+$. 
\end{definition}

Clearly, if the valuation $\mu_{\pi^*f}$ extends by continuity to convex bodies, then  $\mu_{\pi^*f} \in A_k$
 and $\Klain_{\mu_{\pi^*f}} = f$. The strategy of the proof of Theorem~\ref{thm:main} is to  characterize the Klain functions of the elements of $A_k$. 
 To this end we pass to the more manageable subspace of smooth valuations introduced by Alesker. This will result in no loss of generality, since, as we will prove  in Section~\ref{sec:smooth}, smooth valuations are dense in $A_k$. By a deep theorem of Alesker, the Irreducibility Theorem \cite{Alesker:Irreducibility}, every smooth valuation can be represented by integration of a differential form over the normal cycle of a convex body (see Section~\ref{sec:prelim} below for precise statements). This provides us with the computational framework to deduce by an variational argument the  following necessary condition on the Klain function of $\mu\in A_{n-2}$. 
 Here and in the following we will not notationally  distinguish between functions on $\Grass_k(\RR^n)$ and even functions on unit length  simple $k$-vectors of $\RR^n$.

\begin{proposition} \label{prop:relation} If $\mu\in A_{n-2}$, then the function $f\colon \Grass_2(\RR^n) \to 
\RR$, 
$$f(E)=\Klain_\mu(E^\perp),$$ satisfies 
 \begin{equation} \label{eq:relation} (n-1)\int_{E_2^{n-1}} f( 
  \frac{1}{\sqrt{n-1}}\sum_{i=1}^{n-1}  \epsilon_i u_i \wedge u_n) \; d\epsilon   = 
\sum_{i=1}^{n-1} f( u_i \wedge u_n) \end{equation}
for every orthonormal basis $u_1,\ldots, u_{n}$ of $\RR^n$, where $d\epsilon$ denotes the uniform probability measure on $E_2^{n-1}=\{-1,1\}^{n-1}$. 
\end{proposition}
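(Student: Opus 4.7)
My plan is to prove \eqref{eq:relation} by first reducing to the case of smooth $\mu$ and then using Alesker's Irreducibility Theorem, from which \eqref{eq:relation} emerges as a compatibility condition forced by angularity.

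First I would reduce to smooth $\mu$. Both sides of \eqref{eq:relation} are linear and continuous in $\Klain_\mu$, so by the density of smooth valuations in $A_{n-2}$ (to be established in Section~\ref{sec:smooth}) it suffices to prove the identity under the assumption that $\mu$ is smooth. For such $\mu$, the Irreducibility Theorem yields a smooth translation-invariant $(n-1)$-form $\omega$ on the sphere bundle $\RR^n \times S^{n-1}$ with $\mu(K) = \int_{\nc(K)} \omega$ for every convex $K$. On each stratum $F^\circ \times (N_F P \cap S^{n-1})$ of the normal cycle of a polytope $P$, the angular formula forces the fiber integral of $\omega$ over the normal directions at $F$ to equal $\Klain_\mu(\overline F) \gamma(F,P)$ times the volume form on $F$.

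Second, I would extract \eqref{eq:relation} by applying $\mu$ to a carefully chosen family of prisms $P_t = Q_t \times [-T, T] u_n$, where $Q_t \subset u_n^\perp$ interpolates between the cross-polytope $\conv(\pm u_1, \ldots, \pm u_{n-1})$ (whose $(n-2)$-facet normals are the vectors $v_\epsilon = \frac{1}{\sqrt{n-1}}\sum_i \epsilon_i u_i$) and the hypercube $\frac{1}{\sqrt{n-1}}[-1,1]^{n-1}$ with vertices $\{v_\epsilon\}$ (whose facet normals are $\pm u_i$). Both base polytopes share the same inscribed $(n-2)$-sphere of radius $1/\sqrt{n-1}$, making the relevant external angle and length factors directly comparable. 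The Klain values carried by the horizontal $(n-2)$-faces at $\pm T u_n$ are precisely $f(v_\epsilon \wedge u_n)$ on one end of the family and $f(u_i \wedge u_n)$ on the other, matching the two sides of \eqref{eq:relation}. Smoothness of $\omega$ across the combinatorial transition of $Q_t$ then forces the two angular expressions to balance, with the vertical and $(n-3)$-substructure contributions canceling out.

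The main obstacle is the bookkeeping needed to recover exactly the weights of \eqref{eq:relation}, in particular the prefactor $(n-1)$. This prefactor encodes the interplay between the volumes of the two distinct facet shapes (simplices of $(n-2)$-volume $\sqrt{n-1}/(n-2)!$ for the cross-polytope versus cubical facets of $(n-2)$-volume $(2/\sqrt{n-1})^{n-2}$ for the hypercube), the number of facets on each side ($2^{n-1}$ versus $2(n-1)$), and the common external angle $\gamma = 1/4$ at horizontal faces of a prism. Executing the variational argument so that all spurious contributions cancel and only the linear combination \eqref{eq:relation} survives is the technical crux; it is here, rather than in mere continuity of $\mu$, that the smoothness of the representing form $\omega$ is essential.
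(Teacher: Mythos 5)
Your first step---reducing to smooth $\mu$ by density and invoking the Irreducibility Theorem to write $\mu(K)=\int_{N(K)}\omega$---is exactly how the paper begins. The gap is in the second step. A family of prisms $P_t=Q_t\times[-T,T]u_n$ with $Q_t$ interpolating between the cross-polytope and the cube never degenerates: every $P_t$ is a full-dimensional polytope, and for every $t$ \emph{both} the angular formula and the integral of $\omega$ over the normal cycle compute the same number $\mu(P_t)$, so they are identically equal as functions of $t$. Nothing is ``forced to balance'' by smoothness of $\omega$ across a combinatorial transition; the angular expression is itself continuous in $t$ along such a family (the horizontal ridges contribute $\tfrac12\sum_G f(w_G\wedge u_n)\vol_{n-2}(G)$, a weighted surface-area functional of $Q_t$ that depends continuously on $Q_t$ for \emph{any} continuous $f$), so comparing the two ends of the family, or matching one-sided derivatives at a transition, yields no condition on $f$. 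Even the arithmetic you hope for does not line up: weighting the $2^{n-1}$ simplicial facets of volume $\sqrt{n-1}/(n-2)!$ against the $2(n-1)$ cubical facets of volume $(2/\sqrt{n-1})^{n-2}$ produces the coefficient ratio $(n-1)^{(n-1)/2}/(n-2)!$, which is not $(n-1)$ once $n\geq 4$.

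The mechanism that actually produces \eqref{eq:relation} is a first-order variation at a \emph{degenerate} configuration. The paper takes the simplex $S_t=\conv\{0,v_1,\dots,v_{n-1},tv_n\}$, which collapses into the hyperplane $v_n^\perp$ as $t\to0^+$, averages over the sign flips $v_i\mapsto\epsilon_iv_i$, and computes $\lim_{t\to0^+}\frac{d}{dt}\,\mu(S_t)$ twice: once from $\omega$ (Lemma~\ref{lemma:comp1}) and once from the angular formula (Lemma~\ref{lemma:comp2}). The crucial feature is the face $F_0\cap F_n$, whose volume stays bounded away from zero while its dihedral angle opens up to $\pi$ with $\theta_{0n}'(0^+)=-\sqrt{n-1}$: on the angular side this contributes $-\sqrt{n-1}$ times $\Klain_\mu$ of the collapsed face, whereas on the form side the corresponding term is $\omega$ evaluated at the single limiting normal direction, which after the sign-averaging recombines with the contributions of the other faces into the right-hand side of \eqref{eq:relation}. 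Without a degeneration of this kind---a face whose normal cone sweeps at nonzero speed while its volume does not vanish---the two computations of the derivative carry the same information and no constraint on $\Klain_\mu$ emerges. This is the idea your proposal is missing, and the non-degenerating family you propose cannot supply it.
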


One may deduce a similar condition for $\mu\in A_{k}$. We will however proceed differently and first show that \eqref{eq:relation} holds if and only if $f$ is the  restriction of a $2$-homogeneous polynomial to the image of the Pl\"ucker  embedding and then finish the general case by an inductive argument.

\section{Preliminaries}
\label{sec:prelim}

\subsection{Convex geometry}

Let us denote by $\calK(\RR^n)$ the space of convex bodies in $\RR^n$. Continuity of functions on this space will always be understood with respect to the Hausdorff metric. The convex hull of a subset $A\subset \RR^n$ will be denoted by $\conv A$. For other basic notions from Convex Geometry not defined here we refer the reader to our standard reference \cite{Schneider:BM}.

\subsubsection{Continuous valuations}
Alesker's book  \cite{Alesker:Kent} and Chapter~6 of \cite{Schneider:BM} are our references for this material.
Let $\Val= \Val(\RR^n)$ denote the space of translation-invariant continuous valuations on $\RR^n$. By McMullen's decomposition theorem, 
$$ \Val = \bigoplus_{k=0}^n\Val_k,$$ 
where $\Val_k$ denotes the subspace of $k$-homogeneous valuations, i.e., those valuations $\mu$ satisfying $\mu(tK) = t^k\mu(K)$ for all $t>0$ and $K\in \calK(\RR^n)$. Moreover, there is the splitting $\Val_k= \Val_k^+\oplus \Val_k^-$ into even and odd valuations with respect to reflections in the origin. The space $\Val$ is equipped with the topology of uniform convergence on compact subsets. In fact, $\Val$ becomes a Banach space with respect to the norm 
$$ \| \mu \| = \sup_{K \subset B} |\mu(K)|,$$
where  $B$ is a convex body with nonempty interior.

The affine surface area is a notable exception of a geometrically relevant valuation that does not fit into this framework \cite{BesauLudwigWerner:Weighted,LudwigReitzner:Affine}; of a different and more recent vintage are valuations invariant under the indefinite orthogonal groups \cite{BernigFaifman:Indefinite,Faifman:Crofton}.

Let $\mu\in \Val_k^+$ and let $E\subset \RR^n$ be a $k$-dimensional linear subspace. By Hadwiger's characterization of volume (see, e.g., \cite[Theorem 9.1.1]{KlainRota:GP}), the restriction of $\mu$ to $E$ is proportional to the $k$-dimensional volume on $E$,
$$  \mu|_E = \Klain_\mu(E) \vol_k.$$
The map $\Klain\colon \Val_k^+ \to C(\Grass_k(\RR^n))$ is called the  Klain embedding and $\Klain_\mu$ is called the Klain function of $\mu$.  By a theorem of Klain \cite{Klain:Even}, $\Klain_\mu=0$ implies $\mu=0$.

\subsubsection{Convex cones}
If  $C\subset \RR^n$ is a closed convex cone, then the largest linear subspace contained in  $C$ is called the lineality space of $C$ and denoted by $L(C)$. The polar cone to $C$ is 
$$C^\circ =\{ x\in \RR^n\colon \langle x,y\rangle \leq 0 \text{ for } x\in C\}.$$
The external angle $\gamma(C)$ of $C$ is defined to be the fraction of $L(C)^\perp$ taken up by $C^\circ$. More precisely, 
$$ \gamma(C) = \frac{\vol_{n-k-1}(S^{n-1} \cap C^\circ)}{\vol_{n-k-1}(S^{n-k-1})}, \qquad \text{where } k=\dim L(C).$$
It will be important for us that the external angle of $C$ does not depend on the ambient space: If $C\subset \RR^m\subset \RR^n$, then  $\gamma(C)$ is the same whether computed in $\RR^m$ or in $\RR^n$.  

The tangent cone of a polytope $P$ at $x\in P$ is the closed convex cone
$$T_x P = \mathrm{cl}\bigcup_{t>0} t(P-x)$$
The lineality space of   $T_x P$ is the linear subspace parallel to the affine hull the unique face $F$ of $P$ 
containing $x$ in its relative interior. Note also that $T_xP = T_y P $ if $x,y$ belong to the relative interior of the same face $F$;
we denote this common cone by $T_FP$. The normal cone of $P$ at the face $F$ is 
$$ N_F P = T_F P ^ \circ.$$ 
The external angle of a polytope $P$ at a face $F$ is denoted $\gamma(F,P)=\gamma(T_FP)$.

If $\mu$ is a valuation on convex bodies in $\RR^n$ and $E\subset \RR^n$ is a linear subspace,
we denote by $\mu|_E$ the restriction of $\mu$ to $E$. 
\begin{lemma}\label{lemma:res} Let $E\subset \RR^n$ be a linear subspace. If $\mu\in A_k(\RR^n)$, then $\mu|_E\in A_k(E)$. 
\end{lemma}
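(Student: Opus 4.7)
The plan is to verify directly that $\mu|_E$ satisfies the angular formula of Definition~\ref{def:angular} with respect to the ambient space $E$. That $\mu|_E \in \Val_k^+(E)$ is immediate from the definitions (continuity, translation-invariance, $k$-homogeneity, and evenness all restrict), so the content lies in checking the angular expansion on polytopes $P \subset E$.

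The approach rests on observing that each of the three ingredients of the angular formula behaves naturally under passage to the subspace $E$. First, for any $E' \in \Grass_k(E) \subset \Grass_k(\RR^n)$ the identity $(\mu|_E)|_{E'} = \mu|_{E'}$ shows that $\Klain_{\mu|_E}$ is precisely the restriction of $\Klain_\mu$ to $\Grass_k(E)$. Second, since the value of any linear functional on $P \subset E$ depends only on the orthogonal projection of that functional to $E$, the face lattice of $P$ is the same whether $P$ is regarded as a polytope in $E$ or in $\RR^n$; in particular the $k$-dimensional faces and their affine hulls $\overline F$ coincide in the two pictures. Third, as emphasized in Section~\ref{sec:prelim}, the external angle $\gamma(F,P) = \gamma(T_FP)$ is an intrinsic invariant of the tangent cone and does not depend on the ambient space.

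Having assembled these identifications, I would apply the angular formula for $\mu \in A_k(\RR^n)$ to a polytope $P \subset E$ and recognize that each factor in the resulting sum already coincides with its $E$-intrinsic analog, yielding
$$\mu|_E(P)=\mu(P)=\sum_F\Klain_{\mu|_E}(\overline F)\,\gamma(F,P)\,\vol_k(F),$$
which is the angular formula for $\mu|_E$ as an element of $\Val_k^+(E)$. There is no serious obstacle here: the proof is a short bookkeeping argument, and the only point requiring care is the invariance of the external angle under the embedding $E \hookrightarrow \RR^n$, which is precisely the reason the external angle is defined intrinsically via the fraction of $L(C)^\perp$ occupied by $C^\circ$ rather than relative to the full unit sphere of the ambient space. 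The degenerate cases $k \geq \dim E$ require no separate treatment: when $k > \dim E$ both $\mu|_E$ and the putative right-hand side vanish, and when $k = \dim E$ the sum has the single term $P$ with $\gamma(P,P)=1$, which reproduces $\Klain_\mu(E)\vol_k(P)$.
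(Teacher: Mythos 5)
Your argument is correct and is essentially the paper's own proof: both rest on the two observations that $\Klain_{\mu|_E}=\Klain_\mu|_{\Grass_k(E)}$ and that the external angle $\gamma(F,P)=\gamma(T_FP)$ is independent of the ambient space, the latter being exactly why the paper defines $\gamma(C)$ as a fraction of $L(C)^\perp$. Your additional remarks about the face lattice and the degenerate cases are harmless elaborations of the same bookkeeping.
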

\begin{proof}
 This follows immediately from $\Klain_\mu|_{\Grass_k(E)}
 =\Klain_{\mu|_E}$ and the fact that  the external angle $\gamma(F,P)=\gamma(T_F P)$ of $P\subset E \subset \RR^n$ is the same whether computed in $E$ or $\RR^n$.  
\end{proof}

\subsection{Smooth valuations and the normal cycle}

The general linear group $GL(n)$ acts on $\Val$ by $(g\cdot \mu)(K)= \mu(g^{-1} K)$. A valuation 
$\mu\in\Val$ is called smooth, if the map $GL(n)\to \Val$, $g\mapsto g\cdot \mu$ is smooth.

Let us write $S\RR^n=\RR^n\times S^{n-1}$ for the sphere bundle of $\RR^n$. 
As a set, the normal cycle of a convex body $K\subset \RR^n$ is  the set of outward unit normals to $K$,
$$N(K)=\{ (x,v)\in S\RR^n\colon v \text{ is an outward unit normal at } x\in K\}.$$
 Using the metric projection $p_K\colon \RR^n \to K$ (see \cite{Schneider:BM}) it is not difficult to see that $(x,v)\to x+v$ is a bi-Lipschitz equivalence between  $N(K)$ and the boundary of the  convex body $K+B^n$, where $B^n$ is the Euclidean unit ball. We orient normal cycle such that this is map preserves orientations; our convention for the orientation of a domain $D\subset \RR^n$ is that $v_1,\ldots,v_{n-1}\in T_p \partial D$ is positively oriented if $\nu_p,v_1,\ldots, v_{n-1}$ is, where $\nu_p$ is the outward normal at the point $p\in \partial D$.

 In the language of Geometric Measure Theory the preceding paragraph  can be summarized by saying that the normal cycle of a convex body is an integral current. It was proved in \cite{Alesker:VMfdsIII} 
that for a smooth $(n-1)$-form $\omega$ on $S\RR^{n-1}$ which is invariant under translations by elements of $\RR^n$ the function
$$K\mapsto \int_{N(K)} \omega$$ 
defines a smooth valuation. The converse is also true and follows from Alesker's Irreducibility theorem:

\begin{theorem}[{\cite[Theorem 5.2.1]{Alesker:VMfdsI}}]
 Let $0\leq k\leq n-1$. If $\mu\in \Val_{k}^\infty$ then there exists a translation-invariant smooth differential form $\omega\in \Omega^{n-1}(S\RR^n)$ of bi-degree $(k,n-k-1)$ such that 
 $$\mu(K)= \int_{N(K)} \omega$$
 for every convex body $K\subset \RR^n$. 
\end{theorem}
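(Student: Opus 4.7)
The plan is to exhibit a continuous, $GL(n)$-equivariant linear map from the space of translation-invariant smooth $(n-1)$-forms of bi-degree $(k,n-k-1)$ on $S\RR^n$ into $\Val_k^\infty$ and then invoke Alesker's Irreducibility Theorem to deduce that this map is surjective. Concretely, first I define
$$T\colon \Omega^{n-1}(S\RR^n)^{\text{tr},(k,n-k-1)}\longrightarrow \Val_k, \qquad T(\omega)(K) = \int_{N(K)}\omega,$$
and verify the four elementary properties: (i) $T(\omega)$ is a valuation, which follows from the additivity of the normal cycle $N(K\cup L)+N(K\cap L)=N(K)+N(L)$ whenever $K\cup L$ is convex; (ii) $T(\omega)$ is continuous on $\calK(\RR^n)$ in the Hausdorff metric, which follows from the continuity of $K\mapsto N(K)$ in the flat topology of integral currents (via the bi-Lipschitz equivalence $N(K)\cong\partial(K+B^n)$); (iii) translation-invariance of $T(\omega)$ follows from translation-invariance of $\omega$; (iv) if $\omega$ has bi-degree $(k,n-k-1)$, then $T(\omega)$ is $k$-homogeneous, because under $K\mapsto tK$ ($t>0$) the first factor of $N(K)\subset\RR^n\times S^{n-1}$ is scaled by $t$ while the second is unchanged.

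Next, I check that $T$ is $GL(n)$-equivariant for the natural action of $GL(n)$ on $S\RR^n$ by $g\cdot(x,v)=(gx,gv/|gv|)$ (lifted to forms by pullback and an appropriate scalar factor accounting for the sphere normalization), and that $T(\omega)\in\Val_k^\infty$ whenever $\omega$ is smooth. The latter is the key point that justifies looking at smooth valuations in the first place: since $\omega\in C^\infty$ and the action of $GL(n)$ on the Fréchet space of translation-invariant smooth forms is smooth, the orbit map $g\mapsto g\cdot T(\omega)=T(g\cdot\omega)$ is smooth into $\Val_k$, so $T(\omega)$ is a smooth vector.

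Now I apply Alesker's Irreducibility Theorem, which asserts that $\Val_k^+$ and $\Val_k^-$ are topologically irreducible continuous representations of $GL(n)$. Let $I\subset \Val_k$ be the image of $T$, and decompose $I=I^+\oplus I^-$ under the central involution. I show $I^\pm\neq 0$ by exhibiting explicit $\omega$'s whose $T(\omega)$ recovers, for instance, the intrinsic volume $\mu_k$ (even) and a concrete odd example (a smoothed area measure paired with an odd kernel); both computations are classical. Since the closures of $I^\pm$ in $\Val_k^\pm$ are closed, nonzero, $GL(n)$-invariant subspaces, Irreducibility forces them to equal $\Val_k^\pm$. Hence the image of $T$ is dense in $\Val_k$.

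The main obstacle is passing from density in $\Val_k$ to genuine surjectivity of $T$ onto $\Val_k^\infty$: the argument just given only yields $\overline I=\Val_k$, which is not enough. This is resolved by working at the level of smooth vectors. One views $\Val_k^\infty$ with its natural Fréchet topology as a smooth admissible $GL(n)$-representation, so that Casselman–Wallach-type rigidity applies: a continuous $GL(n)$-equivariant map from one such representation to another, whose image is dense in the ambient continuous representation, is automatically onto the smooth vectors. Concretely, one checks that the source of $T$ is itself a smooth Fréchet $GL(n)$-module, that $T$ is continuous between smooth vectors, and that its image, being a nonzero $GL(n)$-invariant subspace of an irreducible smooth representation whose closure is all of $\Val_k^\pm$, must in fact equal $\Val_k^{\infty,\pm}$. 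This last step is the technical heart of the proof and is where Alesker's deep analytic input on admissibility of $\Val_k^\infty$ is used; everything preceding it is formal manipulation of the normal cycle.
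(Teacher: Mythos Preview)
The paper does not prove this theorem; it is quoted verbatim as \cite[Theorem~5.2.1]{Alesker:VMfdsI} and used as a black box. So there is no ``paper's own proof'' to compare against. Your outline is in fact the standard strategy behind Alesker's original argument: build the integration map $T$, check it is nonzero and $GL(n)$-equivariant, and then use the Irreducibility Theorem together with Casselman--Wallach to upgrade density to surjectivity onto smooth vectors.

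That said, two of your steps are not yet proofs. First, the $GL(n)$-equivariance is more delicate than your parenthetical suggests: $GL(n)$ does not act on $S\RR^n=\RR^n\times S^{n-1}$ in a way that is compatible with the bi-grading, and the fix is not a scalar factor. One either passes to the conormal cycle in $\RR^n\times(\RR^n)^*$ (where $GL(n)$ acts linearly and the conic structure replaces the sphere), or one works with forms on $\RR^n\times(\RR^n\setminus\{0\})$ that are homogeneous of the correct degree in the second variable; only after this reformulation does $T$ become honestly equivariant. Second, invoking Casselman--Wallach requires checking that both the source and $\Val_k^\infty$ are admissible smooth Fr\'echet representations of moderate growth; you assert this but do not verify it, and for the space of translation-invariant forms it is not entirely obvious. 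Your exhibition of a nonzero odd valuation in the image is also left vague (``a smoothed area measure paired with an odd kernel'' is not a construction). None of these are fatal to the strategy, but each would need real work to become a proof, and the last step in particular is exactly where Alesker's argument has its analytic depth --- as you yourself note.
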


\subsection{Grassmannians and exterior powers}

Here we  recall the correspondence between simple $k$-vectors and $k$-planes in $\RR^n$. For more information and proofs see, e.g., \cite{Federer:GMT}. 
If $w_1,\ldots, w_k$ is a basis of a linear subspace  $E\subset \RR^n$, then 
$$w_1\wedge \cdots \wedge w_k$$
is a simple $k$-vector. A different choice of basis yields a proportional $k$-vector and 
$$ \{ v\in \RR^n\colon v\wedge w_1\wedge \cdots \wedge w_k=0\}= E.$$

A choice of orientation on $\RR^n$ is equivalent to a choice unit $n$-vector $\omega$. With respect to this choice the Hodge star operator $* \colon\largewedge^k\RR^n\to \largewedge^{n-k}\RR^n$ is defined by 
$$  \xi\wedge * \eta= \langle \xi, \eta\rangle \omega, \qquad \xi,\eta\in \largewedge^k\RR^n,$$
where the inner product $\langle \xi,\eta\rangle$ is defined by declaring $e_{i_1}\wedge \cdots \wedge e_{i_k}$, $1\leq i_1<\cdots < i_k\leq n$, for the the standard orthonormal basis $e_1,\ldots,e_n$ of $\RR^n$,  to be an orthonormal basis of $\largewedge^k\RR^n$. 
Hence, if the orientation on $E^\perp$ is chosen such that $E\oplus E^\perp =\RR^n$   has orientation $\omega$, then 
\begin{equation} \label{eq:hodge} \psi(E^\perp) = * \psi(E), \qquad E\in \widetilde \Grass_k(\RR^n), \end{equation}
where $\psi$ denotes the Pl\"ucker embedding \eqref{eq:pluecker}.

Finally, let us remark that if $v_1\wedge \cdots \wedge v_k$ and $w_1\wedge \cdots \wedge w_k $ are simple $k$-vectors then 
$$\langle v_1\wedge \cdots \wedge v_k, w_1\wedge \cdots \wedge w_k \rangle = \det (\langle v_i, w_j\rangle ).$$
In particular, 
\begin{equation}\label{eq:norm_volume} |v_1\wedge \cdots \wedge v_k| = k! \vol_k(\conv\{0,v_1,\ldots, v_k\}).\end{equation}

\subsection{Representation theory of the orthogonal group}

We recall from \cite{FultonHarris} the parametrizations of the  irreducible complex representations of the special  orthogonal groups $SO(n)$.
The description of the representations of these groups is sensitive to the parity of the dimension $n$.
The irreducible representations of $SO(2m+1)$ are parametrized by integer sequences $(\lambda_1, \ldots, \lambda_m)$ with \begin{equation}\label{eq:parametr_so_odd}\lambda_1\geq\ldots\geq \lambda_m\geq 0;\end{equation}
the irreducible representations of $SO(2m)$ are parametrized by integer sequences $(\lambda_1, \ldots, \lambda_m)$ with \begin{equation}\label{eq:parametr_so_even}\lambda_1\geq\ldots\geq \lambda_m\quad \text{and} \quad\lambda_{m-1}\geq |\lambda_m|.\end{equation}

For example,  the representation $\largewedge^i V$, where $V=\CC^n$ is the standard representation of $SO(n)$, is irreducible for $1\leq i < m$  and  corresponds to  
$$\lambda_1=\cdots=\lambda_i=1 \qquad \text{and}\qquad \lambda_{i+1}=\cdots =\lambda_{m}=0.$$
When  $n$ is odd, then also $\largewedge^m V$ is irreducible and has highest weight
$$\lambda_1=\cdots=\lambda_m=1;$$ 
but when $n$ is even  $\largewedge^m V$ is not irreducible: the eigenspaces of the Hodge star operator are the irreducible subrepresentations with highest weights 
$$\lambda_1=\cdots=\lambda_{m-1}=1 \quad \text{and} \qquad \lambda_{m}=\pm 1.$$ 

\subsubsection{Spherical harmonics} Let  $\calH_p^n$  denote the subspace of $p$-homogeneous complex-valued polynomials on $\RR^n$  that are harmonic, i.e., satisfy $\Delta f= 0$. The restriction of $\calH_p^n$ to $S^{n-1}$ yields precisely the irreducible subrepresentations of $C(S^{n-1})$ under the action of the orthogonal group.  Since 
$ \frac{\partial^2}{\partial x^2} + \frac{\partial^2}{\partial y^2}= 4 \frac{\partial^2}{\partial z \partial \bar z}$, the polynomial  
\begin{equation}\label{eq:sphericalhw}\bar z_1^p = (x_1 -ix_2)^p\end{equation}
is in $\calH_p^n$ and a short computation shows that it is in fact a highest weight vector of  weight 
$(p,0,\ldots, 0)$, see, e.g., \cite[Exercise 7.2]{Sepanski:LieGroups}.

\subsubsection{Restriction to the image of the Pl\"ucker embedding}

\begin{lemma}\label{lemma:highest_weights_restrictions}
 The subspace of $C(\widetilde\Grass_k(\RR^n))$ of restrictions of $2$-homogeneous polynomials on $\largewedge^k \RR^n$  to $\widetilde \Grass_k(\RR^n)$ decomposes under the natural action of 
 $SO(n)$  precisely into the  irreducible representations with highest weights $(2 m_1, 2 m_2,\ldots, 2 m_{\lfloor n/2\rfloor})$ satisfying 
 $|m_{1}|\leq 1$ and $m_i=0$ for $i>\min(k,n-k)$. Its dimension is 
 \begin{equation}\label{eq:plucker dim} \frac{1}{n-k+1}\binom{n}{k}\binom{n+1}{k+1}.\end{equation}
 \end{lemma}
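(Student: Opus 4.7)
The plan is to identify the image of the restriction map
$\Sym^2\bigl((\largewedge^k V)^*\bigr) \to C(\widetilde\Grass_k(\RR^n))$
as an explicit $GL(n)$-representation (with $V = \CC^n$) and then branch it down to $SO(n)$.

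\textbf{Identification and dimension.} By the Borel--Weil theorem applied to the Pl\"ucker embedding of $\Grass_k(\CC^n)$, the degree-$d$ part of its homogeneous coordinate ring is the irreducible $GL(n)$-representation $S^{(d^k)} V$ (the Schur functor indexed by the partition with $k$ rows of length $d$). Consequently the kernel of our restriction is precisely the degree-$2$ piece of the Pl\"ucker ideal and the image is $GL(n)$-isomorphic to $S^{(2^k)} V$. The hook content formula gives
$$\dim S^{(2^k)} V = \frac{n!(n+1)!}{(n-k)!(n-k+1)!k!(k+1)!} = \frac{1}{n-k+1}\binom{n}{k}\binom{n+1}{k+1},$$
which is the dimension \eqref{eq:plucker dim}.

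\textbf{Branching to $O(n)$.} Assume first $k\leq n/2$; the opposite case will follow from the $SO(n)$-equivariant Hodge identification $\widetilde\Grass_k \cong \widetilde\Grass_{n-k}$. Littlewood's branching rule gives
$$S^{(2^k)} V\big|_{O(n)} = \bigoplus_\mu \Bigl(\sum_{\delta} c^{(2^k)}_{\mu,\delta}\Bigr)\,[\mu]_{O(n)},$$
where $\delta$ runs over partitions with all even-length rows. Since $\delta$ must fit inside $(2^k)$, one is forced to take $\delta = (2^c)$ for some $0\leq c\leq k$. The admissible $\mu$ are of the form $(2^{k-c-e}, 1^{2e})$ with $0 \leq e \leq \min(c, k-c)$. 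For $e\geq 1$ the skew shape $(2^k)/\mu$ has second column of length $c+e$, while the content $(2^c)$ only supplies $c$ distinct labels, so the strict-increase requirement on column $2$ rules out any semistandard filling. For $e=0$, that is $\mu = (2^{k-c})$, the skew shape is a $c\times 2$ rectangle whose unique valid filling puts $(i,i)$ in row $k-c+i$, yielding $c^{(2^k)}_{(2^{k-c}),(2^c)} = 1$. Therefore
$$S^{(2^k)} V\big|_{O(n)} = \bigoplus_{j=0}^{\min(k,n-k)} [(2^j)]_{O(n)}.$$

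\textbf{Passage to $SO(n)$.} For $j < n/2$ the irreducible $[(2^j)]_{O(n)}$ remains irreducible under $SO(n)$, with highest weight $(2,\ldots,2,0,\ldots, 0)$ having $j$ twos. When $n$ is even and $j = n/2$ (which occurs only when $k = n/2$), the restriction splits along the Hodge eigenspaces into two $SO(n)$-irreducibles with highest weights $(2,\ldots, 2, \pm 2)$. Collecting these cases yields exactly the highest weights $(2m_1, \ldots, 2m_{\lfloor n/2\rfloor})$ with $|m_1|\leq 1$ and $m_i = 0$ for $i > \min(k, n-k)$ asserted in the lemma, since dominance $m_1\geq \ldots \geq |m_{\lfloor n/2\rfloor}|$ together with $|m_1|\leq 1$ forces each $m_i\in\{-1,0,1\}$ of the stated form.

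The main obstacle is the Littlewood--Richardson computation, but the two-column rectangular shape $(2^k)$ makes it tractable: the strict-increase condition on the second column, combined with the limited label supply from the content $(2^c)$, singles out a unique admissible $\mu$ for each $c$.
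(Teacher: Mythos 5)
Your proof is correct. Note that the paper does not actually prove this lemma: it cites Lemmas~4.2 and~4.3 of \cite{w:angularity} as "well known", so you are supplying an argument where the paper offers only a reference; the route you take (projective normality of the Pl\"ucker embedding together with Borel--Weil to identify the image of $\Sym^2((\largewedge^k V)^*)$ with the irreducible $GL(n)$-module $S^{(2^k)}V$, then Littlewood's restriction rule) is the standard one and is essentially what underlies the cited source. The key steps all check out: the hook content computation gives \eqref{eq:plucker dim}; for the branching, $\delta$ must be an even-row partition inside the two-column rectangle, hence $\delta=(2^c)$, and your column-strictness argument correctly kills every $\mu=(2^{k-c-e},1^{2e})$ with $e\geq 1$ (the second column of $(2^k)/\mu$ has $c+e$ boxes but only $c$ labels are available) while the $c\times 2$ rectangle for $e=0$ has the unique filling with row $i$ equal to $(i,i)$, which is a lattice filling, so each $[(2^j)]$ occurs with multiplicity one. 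Two small points you pass over silently but which are harmless: the identification of the kernel with the degree-$2$ Pl\"ucker ideal requires that quadrics vanishing on the \emph{real} points of the cone vanish on the complex Grassmannian (true, as the real Grassmannian is Zariski dense), and Littlewood's rule is applied within its stable range $\ell(\lambda)=k\leq \lfloor n/2\rfloor$, which your reduction to $k\leq n/2$ guarantees. The final passage to $SO(n)$, including the Hodge-star splitting of $[(2^{n/2})]$ when $k=n/2$, matches the paper's conventions for the highest weights.
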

\begin{proof}
 This is a well-known fact that is proved for example in \cite[Lemma 4.3]{w:angularity} and \cite[Lemma 4.2]{w:angularity}
\end{proof}

\section{Reduction to smooth valuations}
\label{sec:smooth}

\begin{lemma}
 $A_k\subset \Val_k^+$ is closed and invariant under the action of $O(n)$. 
\end{lemma}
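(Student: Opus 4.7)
The statement splits cleanly into two independent claims that each reduce to routine bookkeeping, so I would not try to be clever. The key observation is that both claims follow from just three ingredients: (i) evaluation at a fixed convex body is continuous on $\Val_k^+$; (ii) the Klain function depends continuously on $\mu$ pointwise (even uniformly); and (iii) affine-hull, volume, and external angles behave equivariantly under isometries.

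For closedness, I would take a sequence $\mu_j \in A_k$ converging to some $\mu \in \Val_k^+$ and check the angular identity for an arbitrary polytope $P$. On the left-hand side, $\mu_j(P)\to\mu(P)$ because convergence in the Banach norm of $\Val$ implies pointwise convergence. On the right-hand side, $P$ has only finitely many $k$-dimensional faces, so it suffices to show $\Klain_{\mu_j}(\overline F)\to \Klain_\mu(\overline F)$ for each such $F$. But $\Klain_\nu(E)$ is (up to the constant $\omega_k$) just $\nu(B^n\cap E)$, so pointwise—even uniform—convergence of Klain functions is immediate from $\|\mu_j-\mu\|\to 0$. Passing to the limit in the identity $\mu_j(P)=\sum_F\Klain_{\mu_j}(\overline F)\gamma(F,P)\vol_k(F)$ then places $\mu$ in $A_k$.

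For $O(n)$-invariance, I would fix $g\in O(n)$ and $\mu\in A_k$ and simply unwind $(g\cdot\mu)(P)=\mu(g^{-1}P)$ using the angular formula for $\mu$. The faces of $g^{-1}P$ are exactly the $g^{-1}F$ for $F$ a face of $P$; because $g$ is an isometry, $\vol_k(g^{-1}F)=\vol_k(F)$ and $\gamma(g^{-1}F,g^{-1}P)=\gamma(F,P)$, and the translate-to-origin of the affine hull is $g^{-1}\overline F$. This rewrites
\[(g\cdot\mu)(P)=\sum_F\Klain_\mu(g^{-1}\overline F)\,\gamma(F,P)\vol_k(F).\]
A one-line computation with the restriction to $B^n\cap E$ gives the general identity $\Klain_{g\cdot\mu}(E)=\Klain_\mu(g^{-1}E)$ valid for every $\mu\in\Val_k^+$, which turns the right-hand side above into the angular formula for $g\cdot\mu$ at $P$.

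There is no real obstacle here; the only thing to be mildly careful about is the equivariance of the Klain function, which must be derived rather than assumed, but this is a two-line calculation from the defining property $\mu|_E=\Klain_\mu(E)\vol_k$ applied to $K\subset E$ and $g^{-1}K\subset g^{-1}E$.
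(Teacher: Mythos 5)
Your proof is correct and follows essentially the same route as the paper: pass to the limit in the angular identity using uniform convergence of the Klain functions for closedness, and unwind the definition with the equivariance of $\Klain$ for the $O(n)$-invariance (which the paper simply declares clear from the definition). No issues.
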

\begin{proof}
 If $(\mu_i)_{i=1}^\infty$ is a sequence of angular valuations converging uniformly on compact subsets to some valuation $\mu\in \Val_k^+$, then also $\Klain_{\mu_i}\to \Klain_\mu$ uniformly. If $P$ is a polytope, then
 $$  \mu_i(P)= \sum \Klain_{\mu_i}(\overline F) \gamma (F,P) \vol_k(F)$$
 for every $i$. Passing to the limit, we see that $\mu$ is angular as well.   
It is clear form the definition that for every $g\in O(n)$ the valuation $g\cdot \mu$ is angular if $\mu$ is.
\end{proof}

Let us recall some terminology from representation theory (see, e.g., \cite{Knapp:Examples,Sepanski:LieGroups}).
Let $K\times V\to V$ be a representation of a group $K$ on an arbitrary vector space $V$. A vector $v\in V$ is a called $K$-finite the span of $\{ k\cdot v\colon k\in K\}$ is finite-dimensional. For a compact Lie group $K$, let $\widehat K$ be the set of equivalence classes of irreducible finite-dimensional representations of $K$.  A continuous representation of $GL(n)$ on a Frech\'et space $V$ is called admissible if for each $\gamma\in \widehat{O(n)}$ the sum of all irreducible $O(n)$-invariant subspaces that are in the class $\gamma$ has finite dimension. The existence of the  Klain embedding $\Klain\colon \Val_k^+\to C(\Grass_k(\RR^n))$ implies that the representation of $GL(n)$ on $\Val_i^+$ is admissible \cite[Proposition 4.1]{Alesker:McMullen}.

\begin{lemma}\label{lemma:smooth}
 $A_k\cap \Val_k^\infty$ is a dense subspace of $A_k$. 
\end{lemma}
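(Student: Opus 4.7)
My plan is to decompose $A_k$ into its $O(n)$-isotypic components, show these are all finite-dimensional and algebraically dense in $A_k$, and then argue via the standard Harish-Chandra principle that they consist of smooth valuations.

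First, by the preceding lemma, $A_k$ is a closed $O(n)$-invariant subspace of the Banach space $\Val_k^+$. The admissibility of $\Val_k^+$ as a $GL(n)$-representation, already cited in the paper, implies that for every $\tau \in \widehat{O(n)}$ the $O(n)$-isotypic component $\Val_k^+(\tau)$ is finite-dimensional, and hence so is its intersection $A_k(\tau) := A_k \cap \Val_k^+(\tau)$ with $A_k$. By the Peter--Weyl theorem for continuous representations of a compact group on a Banach space, the algebraic direct sum $\bigoplus_\tau A_k(\tau)$ of $O(n)$-finite vectors in $A_k$ is dense in $A_k$.

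It remains to show each $A_k(\tau)$ lies in $\Val_k^\infty$. For this I would invoke the classical theorem of Harish-Chandra: in an admissible continuous Banach representation of a real reductive Lie group $G$, every $K$-finite vector is $C^\infty$ for the $G$-action. Applied to $G = GL(n,\RR)$ and $K = O(n)$, it yields $A_k(\tau) \subset \Val_k^\infty$ for all $\tau$, so $A_k \cap \Val_k^\infty \supset \bigoplus_\tau A_k(\tau)$ is dense in $A_k$, proving the lemma.

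The main obstacle is this appeal to Harish-Chandra's theorem. It is a standard tool in the admissible-representation theory underlying Alesker's approach to valuations, but it is not entirely elementary, so one should verify its hypotheses apply verbatim to the Banach representation of $GL(n,\RR)$ on $\Val_k^+$. A naive attempt to smooth $\mu \in A_k$ by $GL(n)$-convolution cannot work directly, because $A_k$ is only $O(n)$-invariant and not $GL(n)$-invariant; smoothing by $O(n)$-convolution alone produces vectors that are only $O(n)$-smooth, not $GL(n)$-smooth. Passing through the $O(n)$-isotypic decomposition and invoking admissibility is precisely what turns $O(n)$-finiteness into full $GL(n)$-smoothness.
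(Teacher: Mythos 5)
Your argument is correct and is essentially the paper's own proof: both rely on the density of $O(n)$-finite vectors in the Banach representation of the compact group $O(n)$ on the closed invariant subspace $A_k$, combined with the fact that in the admissible $GL(n)$-representation $\Val_k^+$ every $O(n)$-finite vector is smooth. The detour through finite-dimensionality of the isotypic components is harmless but not needed for the density step.
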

\begin{proof}
The action of $O(n)$ on $A_k$ is a continuous representation of a compact Lie group on a Banach space. It is well known that in this situation the subspace of $O(n)$-finite vectors is dense (see, e.g., \cite[Theorem 3.46]{Sepanski:LieGroups}). It is another well-known fact that for every admissible representation of $GL(n)$ on a Frech\'et space, every $O(n)$-finite vector is smooth (see, e.g., \cite[Proposition 8.5]{Knapp:Examples}). We conclude that smooth valuations in $A_k$ form a dense subspace. 
\end{proof}

\section{Proof of Proposition~\ref{prop:relation}}

Throughout  this section let  $ \mu\in \Val_{n-2}^+(\RR^n)$ be fixed. Let us also fix an orientation $or$ of $\RR^n$ and let us  suppose that  $\mu$ is given by a translation-invariant smooth differential form $\omega\in \Omega^{n-1}(S\RR^n)$ of bi-degree $(n-2,1)$, 
 $$\mu(K)=\int_{N(K)} \omega.$$

 Consider the simplex $S\subset \RR^{n}$ spanned by the points $p_0=0$, $p_i=v_i$, $i=1,\ldots, n-1$, $p_n=t v_n$, where $t$ is a positive number, and $v_1,\ldots,v_n$ is some orthonormal basis of $\RR^n$. 
 Let us  make the dependence of the simplex $S$ on the choice of orthonormal basis more explicit by writing 
 $ S=S(v_1,\ldots, v_n)$. The first goal of this section is to prove the following lemma.
 Here and in the following for every function $f\colon \Grass_k(\RR^n)\to \CC$ and every simple nonzero $k$-vector $w_1\wedge \cdots \wedge w_k$ we will often write 
 $ f(w_1\wedge \cdots \wedge  w_k)$ instead of $f(E)$, $E=\{v\in \RR^n\colon v\wedge w_1\wedge \cdots \wedge  w_k=0\}$. 
 \begin{lemma}\label{lemma:comp1}
    $$\lim_{t\to 0^+}  \der{t}  \int_{E_2^{n-1}}  \mu(S (\epsilon_1v_1,\ldots, \epsilon_{n-1}v_{n-1}, v_n)) \; d\epsilon  = \frac{1}{4(n-2)!} \sum_{1\leq i< j<n}  \Klain_\mu( \bigwedge_{\substack{1\leq k\leq n\\k\neq i,j}} v_k ).$$
 \end{lemma}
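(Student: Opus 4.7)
The plan is to use Alesker's representation $\mu(P)=\int_{N(P)}\omega$ and to decompose the normal cycle integral as a sum over the $(n-2)$-dimensional faces of the $n$-simplex $S_t$: the bi-degree $(n-2,1)$ of $\omega$ forces all other faces to contribute zero, so $\mu(S_t)=\sum_G \int_G \int_{N_G(S_t)\cap S^{n-1}}\omega$, with $G$ ranging over the $\binom{n+1}{2}$ codimension-$2$ faces, each determined by the pair of vertices it omits from the $n+1$ vertices of $S_t$.

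The main contributions come from the $\binom{n-1}{2}$ faces $G_{ij}$ omitting $\{\epsilon_iv_i,\epsilon_jv_j\}$ with $1\le i<j<n$. Such a face is spanned by $\{0,tv_n\}\cup\{\epsilon_kv_k:k\ne i,j\}$, has affine hull $E_{ij}:=\linspan\{v_k:k\ne i,j\}$ (independent of $\epsilon$ and $t$), $(n-2)$-volume $t/(n-2)!$, and normal cone $\pos\{-\epsilon_iv_i,-\epsilon_jv_j\}$ whose trace on $S^{n-1}$ is the quadrant of the unit circle in $E_{ij}^\perp=\linspan\{v_i,v_j\}$ selected by $(\epsilon_i,\epsilon_j)$. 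Since the contribution depends on $\epsilon$ only through this pair, averaging over $E_2^{n-1}$ reduces to averaging over the four sign pairs, and the four quadrants tile the full unit circle in $E_{ij}^\perp$. A direct application of Alesker's representation to an $(n-2)$-dimensional body lying in $E_{ij}$ (where relative-boundary contributions of the normal cycle vanish by the bi-degree of $\omega$) identifies $\int_{S^1\subset E_{ij}^\perp}\omega$ with $\Klain_\mu(E_{ij})$. Differentiating the resulting $\tfrac{t}{4(n-2)!}\Klain_\mu(E_{ij})$ in $t$ and summing over pairs yields the right-hand side.

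It remains to show that the three other groups of codimension-$2$ faces contribute zero to the averaged $t$-derivative. The faces omitting $\{tv_n,\epsilon_iv_i\}$ are immediate: their outward normals $-v_n,-\epsilon_iv_i$ and their $(n-2)$-volume are all independent of $t>0$, so the contribution is strictly constant in $t$. The remaining two groups present the main technical obstacle: the unique face omitting $\{0,tv_n\}$ together with the $n-1$ faces omitting $\{0,\epsilon_iv_i\}$ have normal cones that degenerate at $t=0^+$ (antipodal in the first case, orthogonal in the second). In each case the $t$-derivative of the arc integral picks up a boundary term of $\omega$ evaluated at footpoint $v_n$, whose sphere-tangent slot depends on $\epsilon$ through the unit vector $u(\epsilon):=\bigl(\sum_{k<n}\epsilon_kv_k\bigr)/\sqrt{n-1}$. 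To finish, I would expand the $\epsilon$-dependence of the position $(n-2)$-vector of each such face and of $u(\epsilon)$ and apply the Rademacher orthogonality $2^{-(n-1)}\sum_\epsilon\epsilon_k\epsilon_l=\delta_{kl}$ to reduce the averaged sums to a few contractions of $\omega|_{v_n}$; these should vanish either by the evenness of $\omega$ (equivalently $\tau^*\omega=\omega$ for $\tau(x,\xi)=(-x,-\xi)$) or by explicit pair-cancellation between the two groups.
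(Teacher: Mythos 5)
Your strategy coincides with the paper's: the same decomposition of the normal cycle of $S_t$ over its codimension-$2$ faces, the same identification of the leading term from the $\binom{n-1}{2}$ faces omitting two of $\epsilon_1v_1,\ldots,\epsilon_{n-1}v_{n-1}$ (the four sign-quadrants tiling the unit circle in $E_{ij}^\perp$ and the full-circle integral being $\Klain_\mu(E_{ij})$), and the same observation that the faces omitting $\{tv_n,\epsilon_iv_i\}$ contribute a $t$-independent constant. Up to this point the proposal is correct.

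There is, however, a genuine gap in your treatment of the $n-1$ faces omitting $\{0,\epsilon_iv_i\}$. The claim that ``the $t$-derivative of the arc integral picks up a boundary term of $\omega$ evaluated at footpoint $v_n$'' is accurate only for the single face omitting $\{0,tv_n\}$, whose affine hull and arc parametrization are $t$-independent so that only the upper integration limit moves. For the faces omitting $\{0,\epsilon_iv_i\}$, the $(n-2)$-volume, the spanning multivector $\bigwedge_{k\neq i}(v_k-tv_n)$, and the normal arc $\alpha\mapsto\cos\alpha\,u_i+\sin\alpha\,w_i(t)$ all depend on $t$. Differentiating therefore produces, in addition to the boundary term, interior terms: integrals over the full quadrant $[0,\pi/2]$ involving the derivative of $\omega$ along the moving footpoint and the $t$-derivative of the multivector slot. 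These are not contractions of $\omega$ at the single point $v_n$, so your reduction does not account for them. They do vanish after averaging over $E_2^{n-1}$ --- each is odd under flipping a single $v_j$, which is the Rademacher orthogonality you invoke --- but they must first be identified, and your accounting omits them. Moreover, the surviving boundary terms do not vanish individually: the one from the face omitting $\{0,tv_n\}$ (where $\theta'(t)\to-\sqrt{n-1}$) cancels exactly against the sum over $i$ of those from the faces omitting $\{0,\epsilon_iv_i\}$ (where $\theta'(t)\to 1$), and this cancellation --- which your ``should vanish either by evenness of $\omega$ or by pair-cancellation'' leaves open --- is the crux of the lemma; evenness of $\omega$ is not the operative mechanism.
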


 We will introduce now some notation that will be useful not only in the proof of Lemma~\ref{lemma:comp1}, but also in the proof of Lemma~\ref{lemma:comp2} below.
 Let  $F_0,F_1,\ldots, F_n$ be the facets of $S$, where the notation is chosen such that $p_i\notin F_i$. Note that every $(n-2)$-face of $S$ is of the form 
 $$ F_i\cap F_j = \conv\{ p_k\colon k\neq i,j\}.$$
 The dihedral angle $\theta_{ij}\in (0,\pi)$ of this face is given by 
 $$ \cos\theta_{ij} = \langle u_i,u_j\rangle,
 $$
 where $u_i$ is the facet normal of $F_i$. Note that 
 $$u_i=-v_i, \qquad i=1,\ldots, n,$$
 and 
 $$u_0=  \frac{1}{\sqrt{1+(n-1)t^2}} (\sum_{i=1}^{n-1} tv_i + v_n).$$

 \begin{lemma}\label{lemma:volume}
  Let $v_1,\ldots, v_n$ be an orthonormal basis of $\RR^n$. Then 
  $$ \vol_{n-1}( \conv\{ v_1,\ldots, v_n\})=  \frac{\sqrt n}{(n-1)!}.$$
 \end{lemma}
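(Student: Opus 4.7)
The plan is to compute the volume by viewing $T=\conv\{0,v_1,\dots,v_n\}$ as a cone over the base $\conv\{v_1,\dots,v_n\}$ with apex at the origin, and then invoking the standard cone formula $\vol_n(T) = \frac{1}{n}\cdot h\cdot \vol_{n-1}(\conv\{v_1,\dots,v_n\})$, where $h$ is the distance from $0$ to the affine hull of $\{v_1,\dots,v_n\}$.

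First I would compute $\vol_n(T)$ directly. Since $v_1,\dots,v_n$ is an orthonormal basis, $|\det(v_1,\dots,v_n)|=1$, so $\vol_n(T)=\frac{1}{n!}$. (Equivalently, one can apply \eqref{eq:norm_volume}.) Next I would identify the affine hull of $\{v_1,\dots,v_n\}$: each $v_i$ satisfies $\langle v_i,v_1+\cdots+v_n\rangle=1$, so the affine hull is the hyperplane $\{x\in\RR^n:\langle x,v_1+\cdots+v_n\rangle=1\}$. The vector $v_1+\cdots+v_n$ has norm $\sqrt{n}$, hence the unit normal is $\frac{1}{\sqrt n}(v_1+\cdots+v_n)$ and the distance from the origin to this hyperplane is $h=\frac{1}{\sqrt n}$.

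Finally, combining these via the cone formula gives
\[
\frac{1}{n!}=\vol_n(T)=\frac{1}{n}\cdot\frac{1}{\sqrt n}\cdot\vol_{n-1}(\conv\{v_1,\dots,v_n\}),
\]
from which $\vol_{n-1}(\conv\{v_1,\dots,v_n\})=\frac{\sqrt n}{(n-1)!}$, as claimed.

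There is no real obstacle: the only thing to check carefully is the distance from the origin to the affine hull, which is a one-line linear-algebra verification using orthonormality. An alternative route would be to apply identity \eqref{eq:norm_volume} to the edge vectors $v_2-v_1,\dots,v_n-v_1$ and compute the norm of their wedge product via the Gram determinant $\det(\langle v_i-v_1,v_j-v_1\rangle)=\det(\delta_{ij}+1)=n$; this yields the same answer but is a slightly longer calculation than the cone argument.
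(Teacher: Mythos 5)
Your proof is correct, but it takes a genuinely different route from the paper. The paper works entirely inside the affine hull of $\{v_1,\dots,v_n\}$: it views $\conv\{v_1,\dots,v_n\}$ as a pyramid with apex $v_n$ over the base $\conv\{v_1,\dots,v_{n-1}\}$, computes the height $h$ of that apex via the orthogonal decomposition $v_n = \frac{1}{n-1}(v_1+\cdots+v_{n-1}) + \bigl(v_n - \frac{1}{n-1}(v_1+\cdots+v_{n-1})\bigr)$, finding $h^2 = \frac{n}{n-1}$, and then closes the argument by induction on $n$, since the base is the same kind of simplex one dimension down. You instead adjoin the origin as apex, so that the resulting simplex $\conv\{0,v_1,\dots,v_n\}$ is full-dimensional and its volume $\frac{1}{n!}$ is immediate from the determinant (or from \eqref{eq:norm_volume}); the only computation left is the distance $\frac{1}{\sqrt n}$ from the origin to the hyperplane $\{x : \langle x, v_1+\cdots+v_n\rangle = 1\}$, which you verify correctly using orthonormality. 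Your argument is a one-shot computation with no induction, and is arguably the cleaner of the two; the paper's version has the minor advantage of staying inside the hyperplane containing the face, which matches how the simplex $S$ and its faces are used elsewhere in that section, but nothing in the rest of the paper depends on the inductive structure.
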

\begin{proof}
 Note that 
 $$ \vol_{n-1}( \conv\{ v_1,\ldots, v_n\}) = \frac{h}{n-1} \vol_{n-2}(\conv\{ v_1,\ldots, v_{n-1}\}),$$
 where $h$ is the distance from $v_n$ to the affine subspace spanned by $ v_1,\ldots, v_{n-1}$. 
 Since 
 \begin{equation}\label{eq:ortho_decomp}v_n=  \frac{1}{n-1}(v_1 + \cdots +v_{n-1}) +  (  v_n - \frac{1}{n-1}(v_1 + \cdots +v_{n-1}))\end{equation}
 where the first vector is in the affine hull of  $v_1,\ldots, v_{n-1}$ and the second vector is orthogonal to that affine subspace, we get 
 $$ h^2  =  \frac{n}{n-1}.$$
 Induction on $n$ finishes the proof.
\end{proof}

 We divide the set of all $(n-2)$-faces of $S$ into four disjoint subsets. 
 
 \begin{enumerate}
  \item $ F_i\cap F_n$ for  $i=1,\ldots,n-1$. 
 These faces  do not depend on the parameter $t$ and have volume 
 $$ \vol_{n-2}(F_i\cap F_n) = \frac{ 1}{(n-2)!}.$$
 The normal cones   
 $$ N_{F_i\cap F_n}S = \pos \{ -v_i,-v_n\}$$
 are constant and $\theta_{in}=\pi/2$.
 
 %%%%%%%%%%%%
 \item  $ F_i\cap F_j$ for distinct $i,j=1,\ldots n-1$. 
 These faces  depend on $t$, but their affine hull does not.
Their  volume  is 
 $$ \vol_{n-2}(F_i\cap F_j) = \frac{ t}{(n-2)!}.$$
The normal cones  
 $$ N_{F_i\cap F_j}S  = \pos \{ -v_i,-v_j\}$$
are constant and $\theta_{ij}=\pi/2$
 
 %%%%%%%%%%
 
 \item $ F_0\cap F_n$.
 This face does not depend on the parameter $t$, but its normal cone does. By Lemma~\ref{lemma:volume} we have 
 $$   \vol_{n-2}(F_0\cap F_n) = \frac{ \sqrt{n-1}}{(n-2)!}. $$
 The normal cone is 
  $$ N_{F_0\cap F_n}S = \pos \{ u_0,-v_n\}$$ and the dihedral angle is
 $$\theta_{0n}(t) = \arccos (- \frac{1}{\sqrt{1+(n-1)t^2}}).$$
 Moreover
 $$S^{n-1} \cap N_{F_0\cap F_n} S  = \{ \cos\alpha u_n + \sin\alpha w_n \colon 0\leq \alpha\leq \theta_{0n}\},$$
 where 
 $$ w_n=\frac{1}{\sqrt{n-1}} \sum_{i=1}^{n-1} v_i. $$
 
 %%%%%%%%%%%%%%%%%%
 
 \item $F_0\cap F_i$ for $i=1,\ldots, n-1$. These faces 
 have nonconstant volume, affine hull, normal cone, and dihedral angle.
 It follows from a decomposition similar to \eqref{eq:ortho_decomp} that 
 $$ \vol_{n-2}(F_0\cap F_i) = \frac{ \sqrt{1+(n-2)t^2}}{(n-2)!}.$$
 The normal cone is 
  $$ N_{F_0\cap F_i}S = \pos \{ u_0,-v_i\}$$ and the dihedral angle is
 $$\theta_{0i}(t) = \arccos (- \frac{t}{\sqrt{1+(n-1)t^2}}).$$
 Moreover
 $$S^{n-1} \cap N_{F_0\cap F_i} S  = \{ \cos\alpha u_i + \sin\alpha w_i \colon 0\leq \alpha\leq \theta_{0i}\},$$
 where 
 $$ w_i=\frac{1}{\sqrt{1 +(n-2)t^2 }} (\sum_{\substack{1\leq j\leq n-1\\ j\neq i}} tv_j + v_n) . $$

 \end{enumerate}
 
\begin{proof}[Proof of Lemma~\ref{lemma:comp1}]
Observe that 
$$\mu(S)= \sum_F \int_{F\times (N_F S \cap S^{n-1})} \omega,$$
where the sum is over all $(n-2)$-faces of $S$. Using the preceding division of the $(n-2)$-faces 
of $S$, up to an additive constant that does not depend on $t$ we have that    $\mu(S(v_1,\ldots, v_n))$  equals $S_1+S_2+S_3$, where  
 \begin{align*}
  & S_1:=\sum_{1\leq i< j<n} (-1)^{i+j+n+1}\frac{ t}{(n-2)!} or(v_1 \wedge \cdots  \wedge v_n)  
  \\ & \hspace{4cm} \int_{0}^{\pi/2} \langle \omega(-\cos\alpha v_i -\sin \alpha v_j ), \bigwedge_{
   \substack{1\leq k\leq n\\k\neq i,j}} v_k \wedge 
  (\sin\alpha \bar v_i -\cos \alpha \bar v_j )\rangle \, d \alpha,   \\
  & S_2:=  \frac{1}{(n-2)!}  or(v_1\wedge \cdots \wedge v_n)
  \\ &  \hspace{2.5cm} \int_0^{\theta_{0n}(t)}  \langle \omega (\cos\alpha u_n + \sin\alpha w_n), \bigwedge_{1<k<n}(v_k-v_1) \wedge (-\sin\alpha \bar u_n + \cos\alpha \bar w_n)  \rangle  \, d\alpha, \\
  &S_3:= \sum_{i=1}^{n-1} \frac{(-1)^i}{(n-2)!} or(v_1\wedge \cdots \wedge v_n) \\
  &  \hspace{2.5cm} \int_0^{\theta_{0i}(t)}  \langle \omega (\cos\alpha u_i + \sin\alpha w_i(t)  ) ,\bigwedge_{\substack{1\leq k<n\\k\neq i}}(v_k-tv_n)\wedge  (-\sin\alpha \bar u_i + \cos\alpha \bar w_i ) \rangle \,  d\alpha.
 \end{align*}
Here we always  write  $\bar x =(0,x)\in \RR^n\oplus \RR^n$ and $x= (x,0)\in  \RR^n\oplus \RR^n$ if we form wedge products.  Moreover, we have used \eqref{eq:norm_volume} to obtain
$$ | \bigwedge_{1<k<n}(v_k-v_1) | = (n-2)! \vol_{n-2}(\conv \{ v_1,\ldots, v_{n-1}\}) = (n-2)! \vol_{n-2}(F_0\cap F_n)$$
and 
$$|\bigwedge_{\substack{1\leq k<n\\k\neq i}}(v_k-tv_n)|=(n-2)! \vol_{n-2}(F_0\cap F_i)$$
for $i=1,\ldots, n-1$.
 
Now  we compute the first derivative with respect to $t$ and let $t\to 0^+$.
Let us make the dependence of $S_1(t)$ on the choice of orthonormal basis explicit by writing  $S_1(t)(v_1,\ldots, v_n)$. We have 
\begin{align*}  & \int_{E_2^{n-1}} S'_1(t)( \epsilon_1 v_1,\ldots, \epsilon_{n-1} v_{n-1}, v_n)\, d\epsilon \to \sum_{1\leq i<j<n}  (-1)^{i+j+n+1}  or( v_1\wedge \cdots \wedge v_n ) \\
 &  \hspace{2cm}  \frac{1}{4(n-2)!}  \int_0^{2\pi}  \langle  \omega(\cos\alpha v_i + \sin\alpha v_j),\bigwedge_{
   \substack{1\leq k\leq n\\k\neq i,j}} v_k\wedge (-\sin\alpha \bar v_i + \cos\alpha \bar v_j) \rangle \\
 &  = \frac{1}{4(n-2)!} \sum_{1\leq i< j<n}  \Klain_\mu( \bigwedge_{\substack{1\leq k\leq n\\k\neq i,j}} v_k ).
\end{align*}

Next we use 
\begin{equation}\label{eq:theta0n}\theta_{0n}(t)\to \pi, \qquad   \theta_{0n}'(t)\to -\sqrt{n-1}\end{equation}
as $t\to 0^+$. Since the integrand does not depend on $t$, taking derivatives yields 
$$ S_2'(t)\to  \frac{\sqrt{n-1}}{(n-2)!}  or(v_1\wedge \cdots \wedge v_n) \langle \omega(v_n),\bigwedge_{1<k<n}(v_k-v_1) \wedge \bar w_n\rangle.  $$
Now 
\begin{align*}  \sqrt{n-1} \bigwedge_{1<k<n}(v_k-v_1) \wedge \bar w_n  & = (-1)^n \sum_{k=1}^{n-1} v_1 \wedge \cdots \wedge \bar v_k\wedge \cdots \wedge v_{n-1}+ R, 
\end{align*}
where $R$ contains all terms that are multiples of  $v_i\wedge \bar v_i$ for some $1\leq i \leq n-1$. After integration over $E_2^{n-1}$ the terms coming from $R$ cancel and we get
\begin{align*}  \int_{E_2^{n-1}}  &  S'_2(t)( \epsilon_1 v_1,\ldots, \epsilon_{n-1} v_{n-1}, v_n) \, d\epsilon \to\\
& \frac{ (-1)^n}{(n-2)!}  or(v_1\wedge \cdots \wedge v_n)   \langle \omega(v_n), 
\sum_{k=1}^{n-1} v_1 \wedge \cdots \wedge \bar v_k\wedge \cdots \wedge v_{n-1}\rangle.
\end{align*}

Finally, since 
\begin{equation}
 \label{eq:theta0i} \theta_{0i}(t)\to \pi/2, \qquad \theta_{0i}'(t)\to 1, \qquad i=1, \ldots, n-1,
\end{equation}
we get
\begin{align*} & S'_3(t)\to \sum_{i=1}^{n-1} \bigg(  \frac{(-1)^{n+1}}{(n-2)!}or(v_1\wedge \cdots \wedge v_n)   \langle \omega( v_n ), v_1 \wedge \cdots \wedge \bar v_i\wedge \cdots \wedge v_{n-1}  \rangle \\
&+ \frac{(-1)^i}{(n-2)!}   or(v_1\wedge \cdots \wedge v_n)  \int_0^{\pi/2} \langle \langle \operatorname{grad} \omega(-\cos\alpha v_i + \sin \alpha v_n ), \sin\alpha \sum_{\substack{1\leq j\leq n-1\\ j\neq i}} v_j\rangle ,\\
 & \hspace{3cm}\bigwedge_{\substack{1\leq k<n\\k\neq i}}v_k\wedge  (\sin\alpha \bar v_i + \cos\alpha \bar v_n  \rangle  d\alpha \\
& +  \frac{ (-1)^i}{(n-2)!} or(v_1\wedge \cdots \wedge v_n)   \int_0^{\pi/2} \langle \omega(-\cos\alpha v_i + \sin \alpha v_n ), \xi \rangle    d\alpha \bigg),
\end{align*}
where 
\begin{align*}\xi:=&\left. \der{t}\right|_{t=0} \bigwedge_{\substack{1\leq k<n\\k\neq i}}(v_k-tv_n)\wedge  (-\sin\alpha \bar u_i + \cos\alpha \bar w_i(t))  \\
 =& \sum_{\substack{1\leq k<n\\k\neq i}}(-1)^{k+i+1}    v_1\wedge \cdots  \wedge  \underbrace{(\sin \alpha \bar v_i+ \cos\alpha \bar v_n)}_{i\text{th position}} \wedge \cdots  \wedge  
\widehat{v_k} \wedge \cdots \wedge v_n \\
&  \hspace{1cm}+ \cos\alpha   v_1\wedge \cdots  \wedge \widehat{v_i} \wedge \cdots  \wedge  
v_{n-1} \wedge \bar v_k,
\end{align*}
where $\widehat{v_k}$ means that this factor is omitted. 
If we integrate over $E_2^{n-1}$, then the first summand will cancel the contributions from $S_2(t)$ and the second summand will vanish. Indeed,  the term
$$   \int_0^{\pi/2} \langle \langle \operatorname{grad} \omega(-\cos\alpha v_i + \sin \alpha v_n ),  v_j\rangle ,\\
 \bigwedge_{\substack{1\leq k<n\\k\neq i}}v_k\wedge  (\sin\alpha \bar v_i + \cos\alpha \bar v_n  \rangle  d\alpha,$$
where $1\leq j\leq n-1$, $j\neq i$,  will not  change sign if we replace $v_j$ by $-v_j$. 
Similarly after integration the third summand will vanish since 
\begin{align*}(-1)^{k+i+1}    v_1\wedge \cdots  \wedge  \underbrace{(\sin \alpha \bar v_i+ \cos\alpha \bar v_n)}_{i\text{th position}} & \wedge \cdots  \wedge  
\widehat{v_k} \wedge \cdots \wedge v_n\\
& + \cos\alpha   v_1\wedge \cdots  \wedge \widehat{v_i} \wedge \cdots  \wedge  
v_{n-1} \wedge \bar v_k, \end{align*}
where $1\leq k<n$, $k\neq i$,
will not  change sign if we replace $v_k$ by $-v_k$. 
\end{proof}

\begin{lemma} \label{lemma:comp2} If $\mu$ is angular, then 
  \begin{align*}
    & \lim_{t\to 0^+}  \der{t}  \int_{E_2^{n-1}} \mu(S (\epsilon_1v_1,\ldots, \epsilon_{n-1}v_{n-1}, v_n)) \,d\epsilon = \frac{1}{4(n-2)!} \sum_{1\leq i< j<n-1}  \Klain_\mu( \bigwedge_{\substack{1\leq k\leq n\\k\neq i,j}} v_k)\\
    &-\frac{1}{2\pi} \frac{n-1}{(n-2)!} \int_{E_2^{n-1}}  \Klain_\mu ( 
  \sum_{i=1}^{n-1} (-1)^i  \epsilon_i \bigwedge_{\substack{1\leq k<n\\k\neq i}}v_k)\,d\epsilon +
  \frac{1}{2\pi} \frac{1}{(n-2)!} \sum_{i=1}^{n-1} \Klain_\mu(\bigwedge_{\substack{1\leq k<n\\k\neq i}}v_k).  
  \end{align*}

\end{lemma}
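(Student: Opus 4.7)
The strategy is to expand $\mu(S^\epsilon)$ via the angular formula, with $S^\epsilon := S(\epsilon_1v_1,\ldots,\epsilon_{n-1}v_{n-1},v_n)$, and differentiate at $t=0^+$ using the same partition of the $(n-2)$-faces into four families as in the proof of Lemma~\ref{lemma:comp1}. Because $\pos(\epsilon_kv_k)=\pos(v_k)$, the outward facet normals for $S^\epsilon$ are $u_i=-\epsilon_iv_i$ for $1\le i<n$, $u_n=-v_n$, and $u_0=\frac{1}{\sqrt{1+(n-1)t^2}}(t\sum_i\epsilon_iv_i+v_n)$; a direct check shows that the dihedral angles $\theta_{ij}(t)$ and the face volumes coincide with those of the unperturbed simplex, so that only the affine hulls of the faces in families (3) and (4) genuinely depend on $\epsilon$. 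Using $\gamma(F_i\cap F_j,S)=\theta_{ij}(t)/(2\pi)$, differentiation of $\mu(S^\epsilon)=\sum_F\Klain_\mu(\overline F)\gamma(F,S^\epsilon)\vol_{n-2}(F)$ then reduces to four explicit contributions.

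Families (1) and (2) are immediate. Family (1) is entirely $t$-independent. Family (2) has constant $\gamma=1/4$, constant affine hull $\linspan(v_k:k\ne i,j)$, and volume $t/(n-2)!$, so differentiation reproduces the first term of the stated identity. For family (3) the single face $F_0\cap F_n$ has $t$-independent volume $\sqrt{n-1}/(n-2)!$ (Lemma~\ref{lemma:volume}) and $t$-independent affine hull equal to the orthogonal complement of $w:=\sum_i\epsilon_iv_i$ inside $\linspan(v_1,\ldots,v_{n-1})$, which by the interior product formula is represented, up to an orientation that is immaterial for $\Klain_\mu$, by $\iota_w(v_1\wedge\cdots\wedge v_{n-1})=\sum_{i=1}^{n-1}(-1)^{i-1}\epsilon_i\bigwedge_{k<n,\,k\ne i}v_k$. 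Only $\gamma=\theta_{0n}(t)/(2\pi)$ depends on $t$, with derivative $-\sqrt{n-1}/(2\pi)$ by \eqref{eq:theta0n}; multiplying out yields the middle term.

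Family (4) is the delicate case and the main obstacle, since volume, angle, and affine hull all vary in $t$. The product rule gives three contributions per face $F_0\cap F_i$: the volume $\sqrt{1+(n-2)t^2}/(n-2)!$ has vanishing derivative at zero; the angle derivative $1/(2\pi)$, from \eqref{eq:theta0i}, paired with the $t=0$ volume $1/(n-2)!$ and the $\epsilon$-independent Klain value $\Klain_\mu(\bigwedge_{k<n,\,k\ne i}v_k)$, yields the last term after summing over $i$. The only remaining piece is the $\epsilon$-average of $\frac{d}{dt}\big|_{t=0}\Klain_\mu(\overline{F_0\cap F_i})$. Rescaling each generator by $\epsilon_k$, the affine hull $E_i(t,\epsilon)=\linspan(v_k-\epsilon_ktv_n:k\ne i,\,k<n)$ is the graph over $E_i(0)=\linspan(v_k:k\ne i,\,k<n)$ of the chart map $\phi_{t,\epsilon}(v_k)=-\epsilon_ktv_n\in\linspan(v_i,v_n)$, whose $t$-tangent at zero equals $-\sum_{k\ne i,\,k<n}\epsilon_k\,v_n\otimes v_k^*\in\mathrm{Hom}(E_i(0),E_i(0)^\perp)$. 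Because $\Klain_\mu$ is smooth (since $\mu$ is), its differential pairs linearly with this tangent to produce a quantity affine-linear in $(\epsilon_k)_{k\ne i}$ with no constant term, which integrates to zero over $E_2^{n-1}$.
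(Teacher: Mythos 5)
Your proof is correct and follows the paper's overall scheme --- the same four-family decomposition of the $(n-2)$-faces, the observation that for $F_0\cap F_n$ only the external angle varies in $t$, and the limits \eqref{eq:theta0n} and \eqref{eq:theta0i} --- but it takes a genuinely different route through the one delicate step, the faces $F_0\cap F_i$. The paper does not apply the product rule to $\vol_{n-2}(F_0\cap F_i(t))\,\Klain_\mu(\overline{F_0\cap F_i(t)})$; instead it rewrites this product as an integral of the defining form $\omega$ over the product of the face with the full unit circle in the orthogonal complement of its linear hull, and then quotes the parity cancellation already carried out in the proof of Lemma~\ref{lemma:comp1}. You instead note that the volume factor has vanishing $t$-derivative and argue intrinsically that
$\left.\frac{d}{dt}\right|_{t=0}\Klain_\mu(E_i(t,\epsilon))=d\Klain_\mu|_{E_i(0)}\bigl(-\sum_{k\ne i,\,k<n}\epsilon_k\, v_n\otimes v_k^*\bigr)$
is linear in $(\epsilon_k)_{k\ne i}$ with no constant term and hence averages to zero over $E_2^{n-1}$. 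This is valid and arguably cleaner: it isolates exactly what the cancellation requires (differentiability of $\Klain_\mu$, guaranteed by the standing smoothness assumption on $\mu$ in this section, which your proof does use and should be stated as a hypothesis) and avoids re-entering the differential-form computation. Your identification of the linear hull of $F_0\cap F_n$ as $\iota_w(v_1\wedge\cdots\wedge v_{n-1})$ with $w=\sum_i\epsilon_iv_i$ agrees, up to an irrelevant sign, with the paper's expansion of $\bigwedge_{1<k<n}(\epsilon_k v_k-\epsilon_1 v_1)$. One cosmetic caveat: your family-(2) computation yields the sum over $1\le i<j\le n-1$, i.e.\ $1\le i<j<n$, which matches Lemma~\ref{lemma:comp1} and is what the cancellation in the proof of Proposition~\ref{prop:relation} requires; the range $1\le i<j<n-1$ printed in the statement appears to be a typo, so ``reproduces the first term of the stated identity'' should be read with that correction.
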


\begin{proof}
 If $\mu$ is angular then, up to an additive  constant which does not depend on $t$, we have that  $\mu(S (v_1,\ldots, v_{n-1}, v_n))$ equals  $A_1+A_2+A_3$, where 
 \begin{align*}A_1(t)  :=& \frac{t}{4(n-2)!} \sum_{1\leq i< j<n-1}  \Klain_\mu( \bigwedge_{\substack{1\leq k\leq n\\k\neq i,j}} v_k ),\\
  A_2(t)  := &\frac{\theta_{0n}(t)}{2\pi} \frac{\sqrt{n-1}}{(n-2)!}  \Klain_\mu ( \bigwedge_{ 1<k<n} (v_k-v_1)),\\
  A_3(t)  :=&  \sum_{i=1}^{n-1}\frac{\theta_{0i}(t)}{2\pi} \vol_{n-2}(F_0\cap F_i(t))\Klain_\mu(F_0\cap F_i(t)) \\
    = &\sum_{i=1}^{n-1}\frac{\theta_{0i}(t)}{2\pi} 
   \frac{ (-1)^i}{(n-2)!}or(v_1\wedge \cdots \wedge v_n) \\ &  \hspace{2cm}  \int_0^{2\pi}   \langle \omega (\cos\alpha u_i + \sin\alpha w_i(t)  ) ,\bigwedge_{\substack{1\leq k<n\\k\neq i}}(v_k-tv_n)\wedge  (-\sin\alpha \bar u_i + \cos\alpha \bar w_i  \rangle  d\alpha.
 \end{align*}
 Taking the first derivative, letting $t\to 0^+$, and integrating over $E_2^{n-1}$,  the term
 $A_1(t)$ yields
 $$ \frac{1}{4(n-2)!} \sum_{1\leq i< j<n-1}  \Klain_\mu( \bigwedge_{\substack{1\leq k\leq n\\k\neq i,j}} v_k ).$$
 For $A_2(t)$ we use \eqref{eq:theta0n} and get 
 \begin{equation}\label{eq:A2}   -\frac{1}{2\pi} \frac{n-1}{(n-2)!}   \int_{E_2^{n-1}}  \Klain_\mu ( \bigwedge_{ 1<k<n} (\epsilon_k v_k- \epsilon_1 v_1))\,d\epsilon.\end{equation}
For $A_3(t)$ we see as in the proof of Lemma~\ref{lemma:comp1} that after integration over $E_2^{n-1}$ the first derivative of $$\vol_{n-2}(F_0\cap F_i(t))\Klain_\mu(F_0\cap F_i(t)) $$ vanishes. Using \eqref{eq:theta0i}  we are left with 
$$\frac{1}{2\pi} \frac{1}{(n-2)!} \sum_{i=1}^{n-1} \Klain_\mu(\bigwedge_{\substack{1\leq k<n\\k\neq i}}v_k).  $$
To simplify \eqref{eq:A2} note that 
$$ \bigwedge_{ 1<k<n} ( v_k-  v_1)=  \sum_{i=1}^{n-1} (-1)^{i+1} \bigwedge_{\substack{1\leq k<n\\k\neq i}}v_k$$ 
and hence \eqref{eq:A2} equals 
 $$-\frac{1}{2\pi} \frac{n-1}{(n-2)!} \int_{E_2^{n-1}}  \Klain_\mu ( 
  \sum_{i=1}^{n-1} (-1)^i  \epsilon_i \bigwedge_{\substack{1\leq k<n\\k\neq i}}v_k)\, d\epsilon.$$
\end{proof}

\begin{proof}[Proof of Proposition~\ref{prop:relation}]
 Let us first assume that $\mu\in A_{n-2}$ is smooth. From Lemma~\ref{lemma:comp1} and Lemma~\ref{lemma:comp2} we conclude that 
 $$ (n-1) \int_{E_2^{n-1}}  \Klain_\mu ( 
  \frac{1}{\sqrt{n-1}}\sum_{i=1}^{n-1} (-1)^i  \epsilon_i \bigwedge_{\substack{1\leq k<n\\k\neq i}}u_k) \, d\epsilon = 
  \sum_{i=1}^{n-1} \Klain_\mu(\bigwedge_{\substack{1\leq k<n\\k\neq i}}u_k) $$
  for every orthonormal basis $u_1,\ldots, u_{n}$ of $\RR^n$.
  Applying the Hodge star operator  yields
$$ * (-1)^i   \bigwedge_{\substack{1\leq k<n\\k\neq i}}u_k = (-1)^{n+1} u_i \wedge u_n.$$
Combining this with \eqref{eq:hodge}  we obtain \eqref{eq:relation}. Since by Lemma~\ref{lemma:smooth} any continuous angular valuation can be approximated by smooth angular valuations, the claim follows. 
\end{proof}

\section{Necessary condition for $k=n-2$.}

In the section we will deduce the necessary condition in Theorem~\ref{thm:main} in the special case $k=n-2$.

\begin{proposition} \label{prop:characterization}
 Let $f\colon \Grass_2 (\RR^n)\to \RR$ be a continuous function. Then $f$ satisfies the relation \eqref{eq:relation}
 for every orthonormal basis $u_1,\ldots,u_n$ of $\RR^n$ if and only if $f$ is the restriction of a $2$-homogeneous polynomial on $\largewedge^2 \RR^n$ to the image of the Pl\"ucker embedding. 
\end{proposition}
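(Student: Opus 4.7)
The ``if'' direction is a direct bilinear computation. Writing any $2$-homogeneous polynomial on $\largewedge^2\RR^n$ as $P(w)=\langle Aw,w\rangle$ with symmetric $A\in\End(\largewedge^2\RR^n)$, and using $\int_{E_2^{n-1}}\epsilon_i\epsilon_j\,d\epsilon=\delta_{ij}$, one finds
\[(n-1)\int_{E_2^{n-1}}P\Big(\tfrac{1}{\sqrt{n-1}}\sum_{i=1}^{n-1}\epsilon_iu_i\wedge u_n\Big)\,d\epsilon=\sum_i\langle A(u_i\wedge u_n),u_i\wedge u_n\rangle=\sum_iP(u_i\wedge u_n).\]

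For the converse, let $K\subseteq C(\Grass_2(\RR^n))$ denote the closed $O(n)$-invariant subspace of continuous $f$ satisfying \eqref{eq:relation}. Since $\Grass_2(\RR^n)$ is a symmetric space, $C(\Grass_2(\RR^n))$ decomposes multiplicity-freely into $SO(n)$-irreducibles labelled by highest weights $\lambda=(2m_1,2m_2,0,\ldots,0)$ with $m_1\geq m_2\geq 0$. By Lemma~\ref{lemma:highest_weights_restrictions}, the subspace $P$ of restrictions of $2$-homogeneous polynomials is precisely the sum of those isotypic components with $m_1,m_2\in\{0,1\}$; the ``if'' direction gives $P\subseteq K$, so the task is to show that no isotypic component with $m_1\geq 2$ lies in $K$.

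The plan is to reduce to a spherical-harmonic problem by fixing $u\in S^{n-1}$ and setting $f_u(v)=f(v\wedge u)$ for $v\in S(u^\perp)=S^{n-2}$; then $f_u$ is an even continuous function on $S^{n-2}$, and \eqref{eq:relation} translates into
\[(n-1)\int_{E_2^{n-1}}f_u\Big(\tfrac{1}{\sqrt{n-1}}\sum_i\epsilon_iu_i\Big)\,d\epsilon=\sum_i f_u(u_i)\qquad(\ast)\]
for every orthonormal basis of $u^\perp$. Decomposing $f_u$ into $O(n-1)$-irreducibles $\calH_p^{n-1}$ and applying Schur's lemma reduces $(\ast)$ to a check on each $\calH_p^{n-1}$; the cases $p\in\{0,2\}$ follow from the ``if'' direction applied to $f_u$ as a polynomial on $u^\perp$. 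The key \emph{sub-lemma} is that for $n\geq 4$ and every even $p\geq 4$, the highest-weight vector $h_p(x)=(x_1-ix_2)^p$ violates $(\ast)$. Indeed, writing $p=2q$ and applying $(\epsilon_1-i\epsilon_2)^2=-2i\epsilon_1\epsilon_2$, evaluation of $(\ast)$ at the standard basis $u_i=e_i$ collapses to $\frac{2^q(-1)^{q/2}}{(n-1)^{p-1}}=2$ when $q$ is even, which has no solution for integer $n\geq 4$; when $q$ is odd both sides vanish at the standard basis, so one uses an auxiliary basis obtained by a rotation of angle $\pi/4$ in the $(e_2,e_3)$-plane, where a direct expansion of $(\epsilon_1\cos\theta-\epsilon_3\sin\theta-i\epsilon_2)^{2q}$ produces a nonzero discrepancy. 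Combining the sub-lemma with the $SO(n)\supset SO(n-1)$ branching rule---which shows the $SO(n-1)$-types $(m,0,\ldots,0)$ occurring in the $\lambda$-isotypic component of $f_u$ correspond to even integers $m\in[2m_2,2m_1]$---forces $\{0,2,4,\ldots\}\cap[2m_2,2m_1]\subseteq\{0,2\}$, hence $(m_1,m_2)\in\{(0,0),(1,0),(1,1)\}$.

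The case $n=3$ does not fit the above scheme because on $S^1$ the solution set of $(\ast)$ contains extraneous $\calH_{4k}^2$-components; I would handle it separately via the Hodge star, which recasts \eqref{eq:relation} as $\tilde f(\tfrac{u_1+u_2}{\sqrt 2})+\tilde f(\tfrac{u_1-u_2}{\sqrt 2})=\tilde f(u_1)+\tilde f(u_2)$ for every orthonormal pair $u_1,u_2\in\RR^3$, where $\tilde f$ is the corresponding even function on $S^2$. A Fourier expansion of $\tilde f$ on each great circle then shows that the Fourier coefficient of frequency $4$ of $\tilde f|_{\text{great circle}}$ must vanish on every great circle; for $\tilde f\in\calH_{2p}^3$ with $p\geq 2$ the mode-$4$ projection is a nonzero $SO(3)$-equivariant linear functional on $\calH_{2p}^3$, so its joint kernel over all great circles is a proper $SO(3)$-invariant subspace, hence $\{0\}$ by irreducibility. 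The main obstacle I anticipate is the case analysis inside the sub-lemma, where the parity of $p/2$ dictates whether the standard basis or an auxiliary rotated basis witnesses the failure of $(\ast)$; the dimension $n=3$ then requires the distinct great-circle argument.
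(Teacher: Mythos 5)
Your ``if'' direction is correct (it is the paper's polarization argument in matrix form), and your converse strategy is genuinely different from the paper's: you slice \eqref{eq:relation} by fixing $u=u_n$, regard $f_u(v)=f(v\wedge u)$ as an even function on $S^{n-2}$, determine which spherical harmonics $\calH_p^{n-1}$ satisfy $(\ast)$, and feed the answer back through the $SO(n)\downarrow SO(n-1)$ branching rule, whereas the paper evaluates \eqref{eq:relation} directly on Strichartz's highest weight vectors $f_{m_1,m_2}$ of $C(\Grass_2(\RR^n))$ along explicit one-parameter families of bases and does casework on $m_2$. Unfortunately your key sub-lemma is false in exactly one case, and that case cannot be repaired by a better choice of basis. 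Set $N=n-1$ and $z_j=u_{j,1}-iu_{j,2}$; as $(u_1,\dots,u_N)$ runs over orthonormal bases of $u^\perp$, the vector $z$ runs over $\{z\in\CC^N:\sum_jz_j^2=0,\ \sum_j|z_j|^2=2\}$, and for $h_{2q}$ the relation $(\ast)$ is equivalent to the vanishing of the $2q$-homogeneous holomorphic polynomial $N^{1-q}\sum_{|\beta|=q}\tfrac{(2q)!}{\prod_j(2\beta_j)!}\,z^{2\beta}-\sum_jz_j^{2q}$ on the irreducible quadric $\{\sum_jz_j^2=0\}$. In the power sums $P_k=\sum_jz_j^{2k}$ the multinomial sum for $q=3$ equals $16P_3-30P_1P_2+15P_1^3$, so on $\{P_1=0\}$ the discrepancy is $(16N^{-2}-1)P_3$, which vanishes identically precisely when $N=4$. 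Hence for $n=5$ \emph{all} of $\calH_6^4$ satisfies $(\ast)$: your $\pi/4$-rotated basis produces no discrepancy there (the two sides are $-48c^2s^2/(n-1)^2$ and $-3c^2s^2$, which agree at $n=5$), and no other basis can. Since by branching the only spherical $SO(4)$-constituent of the $SO(5)$-types $(6,\pm 6)$ is $(6,0)=\calH_6^4$, your argument cannot exclude $(m_1,m_2)=(3,\pm3)$ when $n=5$. (Two smaller points: you also need that the branching constituents $\calH_m$ with $m\geq 4$ actually occur in the image of $f\mapsto f_u$ --- interlacing gives containment, not surjectivity --- and the auxiliary-basis computation for odd $q\geq5$ is asserted rather than checked.)

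Be aware that this is not merely a weakness of your slicing: $n=5$, $m_1=|m_2|=3$ is exactly the case the paper singles out, and relation \eqref{eq:relation} alone genuinely does not exclude it. Indeed $f_{3,3}=\det(A(2)A(2)^t)^3=\ell^6$ with $\ell(w)=w_{13}-w_{24}+i(w_{14}+w_{23})=\langle w,\alpha\wedge\beta\rangle$, $\alpha=e_1+ie_2$, $\beta=e_3+ie_4$, and for \emph{every} orthonormal basis of $\RR^5$ one has $\sum_{i=1}^{4}\ell(u_i\wedge u_5)^2=0$: this sum is the trace of the quadratic form $v\mapsto\langle v,\langle u_5,\beta\rangle\alpha-\langle u_5,\alpha\rangle\beta\rangle^2$, which vanishes because $\alpha,\beta$ are bilinearly isotropic and orthogonal. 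By the $q=3$ computation above, both sides of \eqref{eq:relation} then reduce to $\sum_i\ell(u_i\wedge u_5)^6$, so $f_{3,3}$ satisfies \eqref{eq:relation} identically; consistently, for the paper's second basis one finds that \eqref{eq:lhs5} and \eqref{eq:rhs5} both equal $-3(c^2s^2-a^2b^2)$ when $m_1=3$, so the advertised comparison of leading terms yields no contradiction there either. Any complete proof of the ``only if'' direction for $n=5$ must therefore invoke information beyond \eqref{eq:relation}. The remaining parts of your plan --- the $q$ even case, and the $n=3$ great-circle argument, where the nonvanishing you need is $P_{2p}^4(0)\neq0$ --- are sound.
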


\begin{proof}
Let us first prove that the restriction of every to $2$-homogeneous polynomial satisfies \eqref{eq:relation}.
To see this recall that 
if $p\colon V \to \CC$ is a homogeneous polynomial of degree $k$ on  a finite-dimensional real vector space $V$, then there exists a 
symmetric multilinear function  $p\colon V^k\to \CC$, 
 called the polarization of $p$ and denoted by again by $p$,  which is uniquely determined by the equation
 $$ p(a,\ldots, a) = p(a),\qquad \text{for all}\ a\in V.$$
For a proof see, e.g., \cite[Appendix A]{Hormander:Convexity}. Now if $f$ is the restriction of a $2$-homogeneous polynomial $p$ on $\largewedge^2\RR^n$ to the image of the Pl\"ucker embedding, then expanding the left-hand side of \eqref{eq:relation} by the bilinearity of the polarization
of $p$ immediately yields the claim. 

To prove the converse, let $W\subset C(\Grass_2(\RR^n))$ be the subspace of all continuous functions $f$ on $\Grass_2(\RR^n)$ with the property that \eqref{eq:relation} holds for every orthonormal basis 
$u_1,\ldots, u_n$. 
 We have to show that $W$ is contained in  the space of restrictions of $2$-homogeneous polynomials on  $\largewedge^k \RR^n$ to  $\Grass_2(\RR^n)$.

Clearly,  the subspace $W$ is closed and  invariant under the action of $SO(n)$. Hence $W$ decomposes into certain irreducible subrepresentations.  The highest weights occurring in the representation $C(\Grass_2(\RR^n))$ are well
known. According to Strichartz \cite{strichartz}, if $n\geq 4$, they all have multiplicity $1$ and are of the form 
$(2m_1,\ldots, 2m_{2},0\ldots, 0)$ 
with integers satisfying 
$$m_1\geq   |m_{2}|\qquad \text{and} \qquad \text{if}\  n>4,\ m_2\geq 0.$$
By Lemma~\ref{lemma:highest_weights_restrictions}, the proof for $n\geq 4$ will be finished if we can show that only the highest weights with
 $$ m_1\leq 1$$
 can occur in $W$. For this it will suffice to show that for all other highest weights the corresponding highest weight vector $f_{m_1, m_{2}}$ is not an element of $W$. This step will be based on an 
 explicit description of the highest weight vectors given by Strichartz  \cite{strichartz} (see also \cite{Alesker:Lefschetz}).
 
For any subspace $E\in \Grass_2(\RR^n)$ choose an orthonormal basis $X^1,X^2$ of $E$ and consider the corresponding $n \times 2$ matrix 
 $$\begin{pmatrix}
    X^1_1  &  X^2_1\\
    \vdots & \vdots  \\
   X^1_{n}  &   X^2_{n}\\ 
   \end{pmatrix}
$$
of coordinates with respect to the standard basis $e_1,\ldots, e_{n}$ of $\RR^{n}$.
Let $X_j$ denote the $j$th row of this matrix. For $l\leq \lfloor n/2 \rfloor$ let $A(l)$ be the $l\times 2$ matrix whose $j$th row is $X_{2j-1} + 
\sqrt{-1} X_{2j}$, $j=1,\ldots, l$. Note that the $l\times l$ matrix  $A(l)A(l)^t$ is independent of the choice of the orthonormal basis of $E$.

A  highest weight vector of the irreducible subrepresentation of $C(\Grass_2(\RR^n))$ with highest weight $(2m_1, 2m_2,0,\ldots, 0)$  
is given by 
\begin{enumerate}
 \item if $m_2\geq 0$,  $$ f_{m_1, m_2}= \det\left( A(1)A(1)^t\right)^{m_1-m_{2}}\det\left( A(2)A(2)^t\right)^{m_2};$$ 
 \item if $m_2<0$,
 $$ f_{m_1, m_2}= \det\left( A(1)A(1)^t\right)^{m_1-|m_{2}|}\overline{\det\left( A(2)A(2)^t\right)}^{|m_2|}.$$
\end{enumerate}

Let $e_1,\ldots, e_n$ be the standard basis of  $\RR^n$ and put 
\begin{align*}
u_1& = se_1 -c e_3,\\
u_2& = e_2,\\
u_i& = e_{i+1}, \qquad 3\leq i\leq n-1,\\
u_n& = ce_1 +s e_3,
\end{align*}
where $c=\cos \phi$ and $s=\sin\phi$ for brevity.

For this choice of orthonormal basis let us   evaluate  \eqref{eq:relation}  for a highest weight vector $f=f_{m_1,m_2}$. Throughout we assume that $m_1\neq 0$. 
A straightforward computation shows that 
\begin{equation}\label{eq:rhs}\sum_{j=1}^{n-1} f_{m_1,m_2}( u_j \wedge u_n) =\begin{cases} 
                                                         1 + (-1)^{m_2} c^{2m_1} + (-1)^{m_1} s^{2m_1}, &  m_2\neq 0;\\
                                                         1 + (n-3) c^{2m_1} + (-1)^{m_1} s^{2m_1},&  m_2= 0,\\
                                                        \end{cases}
\end{equation}
and 
\begin{align}\notag (n-1)& \int_{E_2^{n-1}}    f( 
  \frac{1}{\sqrt{n-1}}\sum_{j=1}^{n-1}  \epsilon_j u_j \wedge u_n )  \,d\epsilon =  \\
 \label{eq:lhs} &   \frac{2^{m_2}}{(n-1)^{m_1-1}}  \int_{E_2^{n-1}} 
  ((n-2)c^2 + 2i \epsilon_1\epsilon_2 s  )^{m_1-m_2} (\epsilon_2\epsilon_3 cs + i \epsilon_1(\epsilon_2 s -\epsilon_3 c))^{m_2}\,d\epsilon , 
\end{align}
provided $m_2\geq0$; if $m_2<0$, then the second factor inside the integral has to be conjugated and $m_2$ replaced by $|m_2|$. 
We will distinguish the  cases $m_2=0$, $m_2$ even (and different from $0$), $m_2$ odd. 

Let us start with the case $m_2=0$. Note that by \eqref{eq:rhs} and \eqref{eq:lhs} both sides of \eqref{eq:relation}
are trigonometric polynomials of degree $2m_1$. Comparing the coefficients of the leading term $c^{2m_1}$, we get
$$  \frac{(n-2)^{m_1}}{(n-1)^{m_1-1}} =  (n-2).$$
This can only hold if $m_1=1$.

Next let us treat the case  $m_2$ even and different from $0$. Again by \eqref{eq:rhs} and \eqref{eq:lhs} the leading term on both sides is $c^{2m_1}$ and comparing coefficients we get 
$$ 2^{|m_2|} \frac{(n-2)^{m_1-|m_2|}}{(n-1)^{m_1-1}} =2.$$
This is equivalent to 
$$ \left( \frac{n-2}{n-1} \right) ^{m_1-1}  = \left( \frac{n-2}{2} \right) ^{|m_2|-1}.$$
Since the left-hand side is strictly less than $1$ and the right-hand side greater equal $ 1$, this is a contradiction. Hence no highest weight vector $f=f_{m_1,m_2}$ with 
$m_2\neq 0$ even can satisfy \eqref{eq:relation}.

Let us finally consider the case $m_2$ odd. Expanding  the two factors inside the  sum in \eqref{eq:lhs} we get 
\begin{align} \notag ((n-2)c^2)^{m_1-m_2} +\binom{m_1-m_2}{1} &  ((n-2)c^2)^{m_1-m_2-1} 2i \epsilon_1\epsilon_2 s  \\
  \label{eq:expansionI} &  + \binom{m_1-m_2}{2} ((n-2)c^2)^{m_1-m_2-2}  (2i  s)^2+ \cdots 
\end{align}
and 
\begin{align} \notag\epsilon_2\epsilon_3 (cs)^{m_2} +  & \binom{m_2}{1}  (cs)^{m_2-1}   i\epsilon_1(\epsilon_2 s- \epsilon_3 c)\\ 
 \label{eq:expansionII} & \hspace{2cm}+ 
\binom{m_2}{2} \epsilon_2\epsilon_3  (cs)^{m_2-2} (i\epsilon_1(\epsilon_2 s- \epsilon_3 c))^2 + \cdots
\end{align}
where $\cdots$ denotes lower order terms.  Observe that 
$$(i\epsilon_1(\epsilon_2 s- \epsilon_3 c))^2= 2\epsilon_2\epsilon_3 cs-1.$$
Integrating over $E_2^{n-1}$ we see that the leading term $c^{2m_1-2}$ on the left-side of \eqref{eq:relation} results from the product of the second terms in the above expansions and the product of the first in \eqref{eq:expansionI} with the third in \eqref{eq:expansionII}. Comparing coefficients we get 
$$ (-1)^{(m_2-1)/2} 
  \frac{2^{m_2+1}(n-2)^{m_1-m_2-1}}{(n-1)^{m_1-1}}  \big((n-2)\binom{m_2}{2}  + (m_1-m_2)m_2\big)
= - m_1.$$ 
Consequently, $(m_2-1)/2$ must be odd and in particular $m_2\geq 3$. 
If $n=2k+2$ is even, then
$$ 2^{m_1} k^{m_1-m_2-1} \big((n-2)\binom{m_2}{2}  + (m_1-m_2)m_2\big)= m_1 (2k+1)^{m_1-1}.$$
We conclude that $2^{m_1}$ divides $m_1$, a contradiction. If $n=2k+1$ is odd ($k\geq 2$), then 
\begin{equation}
 \label{eq:oddcase} (2k-1)^a \big((2k-1) \binom{m_2}{2}+  (a+1)m_2 \big)
  =  m_1 2^{a-1} k^{m_2+a},
\end{equation}
where we put $a= m_1-m_2-1$. 
Since $2l \leq k^l$ for integers $k,l\geq 2$, we have  
\begin{align*}  (2k-1)^a   \big( (2k-1)   \binom{m_2}{2} & +  (a+1)m_2 \big) \\
& \leq  m_2 (2k-1)^{a+1} \frac{k^{m_2-1}}{4} 
+ (2k-1)^a (a+1) m_2 \\
& < m_2 2^{a-1} k^{m_2+a} +  (2k-1)^a (a+1) m_2\\
& = m_1 2^{a-1} k^{m_2+a} + (a+1)( (2k-1)^a m_2 - 2^{a-1}  k^{m_2+a}) \\
& \leq m_1 2^{a-1} k^{m_2+a},
\end{align*}
which contradicts \eqref{eq:oddcase}. 
Therefore if $m_2$ is odd and \eqref{eq:relation} holds for $f_{m_1,m_2}$, then we must have $m_1-|m_2|=0$. 

Now if $m_1=|m_2|\geq 3$ is odd, then after integration over $E_2^{n-1}$ we see that  the leading terms in \eqref{eq:lhs} and \eqref{eq:rhs}  are $(cs)^{m_1-1}$ and $c^{2m_1-2}$. Comparing coefficients we get 
$$(-1)^{(m_1-1)/2}    \frac{2^{m_1}}{(n-1)^{m_1-1}} (m_1-1)   = -1.$$
For the signs to match we must have $m_1=4k+3$ with $k=0,1,2,\ldots$. Hence
\begin{equation}\label{eq:identityOdd}4 \cdot  2^{4k+2} (2k+1) = (n-1)^{4k+2}\end{equation}
which implies that $n-1$ must contain the prime factor $2$ more then once. But then 
\eqref{eq:identityOdd} can only hold if $k=0$ and $n-1=4$.

Now  for $n=5$ and $m_1=|m_2|=3$ one immediately checks that \eqref{eq:lhs} and \eqref{eq:rhs} do coincide. Hence we are going to choose a different orthonormal basis of $\RR^n$:
\begin{align*} u_1 & = c(ae_1+ be_3)+ s e_n,\\
  u_2 & = s(ae_1+be_3)-c e_n,\\
   u_3 & = e_2,\\
   u_i & = e_i, \qquad  4\leq i\leq n-1, \\
   u_n & = -be_1+ae_3,
\end{align*}
where $a=\cos\psi , b=\sin \psi $ for brevity.
A straightforward computation shows that 
\begin{equation}\label{eq:rhs5}\sum_{j=1}^{n-1} f_{m_1,\pm m_1}( u_j \wedge u_n) =c^{2m_1}+s^{2m_1} +(-1)^{m_1} (a^{2m_1}+b^{2m_1})                                                
\end{equation}
and 
\begin{align}\notag (n-1) \int_{E_2^{n-1}}  &  f_{m_1,\pm m_1}( 
  \frac{1}{\sqrt{n-1}}\sum_{j=1}^{n-1}  \epsilon_j u_j \wedge u_n )  \,d\epsilon = \\
 \label{eq:lhs5} & \hspace{1cm} \frac{2^{m_1}}{(n-1)^{m_1-1}}  \int_{E_2^{n-1}} 
( \epsilon_1\epsilon_2 cs - \epsilon_3\epsilon_4 ab + i (\epsilon_1 c +\epsilon_2 s) (\epsilon_3 a +\epsilon_4 b))^{m_1}\,d\epsilon.
\end{align}
Comparing the coefficients the leading term  we get a contradiction for $m_1>1$. We conclude that the validity of \eqref{eq:relation} for a highest weight vector $f_{m_1,m_2}$ implies $m_1\leq 1$.

It remains to treat the case $n=3$. Taking orthogonal complements, we have to show that if an even  continuous  function 
$f\colon S^2\to \CC$ satisfies 
$$  2\int_{E_2^2}  f(\frac{1}{\sqrt 2} \sum_{j=1}^2 \epsilon_j u_j)\,d\epsilon =\sum_{j=1}^2 f(u_j)$$
for every pair of orthonormal vectors $u_1,u_2$, then $f$ is the restriction of a $2$-homogeneous polynomial to $S^2$. According to \eqref{eq:sphericalhw} the function $f=f_{m_1}= (x_1-ix_2)^{2m_1}$, is contained  in the irreducible subrepresentation with highest weight $(2m_1)$. Now if $m_1>0$, then choosing as before $u_1=se_1-ce_3, u_2=e_2$, evaluating  both sides, and comparing the coefficients of the leading term  $s^{2m_1}$,  we get
$$ \frac{1}{2^{m_1-1}} =1. $$
Hence also in this case $m_1\leq 1$, as desired.
\end{proof}

\begin{corollary}\label{cor:codegree2} If  $\mu \in A_{n-2}(\RR^n)$, then  the Klain function of $\mu$ is the restriction of a $2$-homogeneous polynomial on $\largewedge^{n-2}\RR^n$ to the image of the Pl\"ucker embedding. 
\end{corollary}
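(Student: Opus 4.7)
The plan is to combine the two preceding results and then transfer the polynomial representation from $\largewedge^2\RR^n$ to $\largewedge^{n-2}\RR^n$ via Hodge duality. More precisely, define $f\colon \Grass_2(\RR^n)\to\CC$ by $f(E)=\Klain_\mu(E^\perp)$. Proposition~\ref{prop:relation} tells us that $f$ satisfies the linear relation \eqref{eq:relation} for every orthonormal basis of $\RR^n$. (Strictly speaking Proposition~\ref{prop:relation} was stated for real-valued $f$, but since $A_{n-2}$ is a complex vector space, splitting $\mu$ into its real and imaginary parts reduces the complex case to the real one without loss.) Proposition~\ref{prop:characterization} then yields a $2$-homogeneous polynomial $p\colon \largewedge^2\RR^n\to\CC$ such that $f(E)=p(\psi(E))$ for all $E\in\widetilde\Grass_2(\RR^n)$, where $\psi$ denotes the Plücker embedding.

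It remains to produce a $2$-homogeneous polynomial $\tilde p$ on $\largewedge^{n-2}\RR^n$ whose restriction to the Plücker image equals $\Klain_\mu$. The key ingredient is the identity \eqref{eq:hodge}: for any choice of compatible orientation one has $\psi(E^\perp)=*\psi(E)$. Since the Hodge star operator $*\colon \largewedge^2\RR^n\to \largewedge^{n-2}\RR^n$ is a linear isomorphism, its inverse $*^{-1}$ is linear as well, and the composition
\[
\tilde p := p\circ *^{-1}\colon \largewedge^{n-2}\RR^n\to \CC
\]
is again a $2$-homogeneous polynomial. For any $E'\in\widetilde\Grass_{n-2}(\RR^n)$, writing $E'=E^\perp$ with the induced orientation, we have
\[
\tilde p(\psi(E'))=p(*^{-1}\psi(E^\perp))=p(\psi(E))=f(E)=\Klain_\mu(E^\perp)=\Klain_\mu(E').
\]
Finally, we need to argue that this identity, so far written on the oriented Grassmannian, descends to $\Grass_{n-2}(\RR^n)$: both sides are even under change of orientation (the Klain function because it is defined on unoriented subspaces, and $\tilde p\circ\psi$ because $\tilde p$ is $2$-homogeneous, hence invariant under the sign change $\psi(E)\mapsto -\psi(E)$ induced by reversing orientation). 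This completes the identification.

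The only step that requires any care is the passage from $f$ to $\Klain_\mu$ via Hodge duality, and this is essentially forced by the linearity of $*$ together with equation \eqref{eq:hodge}; all the analytic work has already been done in Propositions~\ref{prop:relation} and \ref{prop:characterization}.
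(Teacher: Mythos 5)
Your proof is correct and follows exactly the route the paper takes: Proposition~\ref{prop:relation} plus Proposition~\ref{prop:characterization} give a $2$-homogeneous polynomial on $\largewedge^2\RR^n$, and \eqref{eq:hodge} together with the linearity of the Hodge star transfers it to $\largewedge^{n-2}\RR^n$. The paper states this in one sentence; you have merely (and correctly) filled in the details, including the harmless real/imaginary-part reduction and the evenness check.
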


\begin{proof} Proposition~\ref{prop:relation}, Proposition~\ref{prop:characterization}, and \eqref{eq:hodge} imply that if $\mu \in A_{n-2}$, then the Klain function of $\mu$ is the restriction of a $2$-homogeneous polynomial on $\largewedge^{n-2}\RR^n$ to the image of the Pl\"ucker embedding.

\end{proof}

\section{Proof of Theorem~\ref{thm:main}}

In this final section we provide an inductive argument that allows us to deduce Theorem~\ref{thm:main} from the special case $k=n-2$ already established in  Corollary~\ref{cor:codegree2}. We will need the following fact. 

\begin{proposition}[{\cite[Proposition 4.4]{w:angularity}}]\label{prop:polynomial_restriction}
 Let $1\leq k <n-1$ and $f$ be an even continuous function on $\widetilde{\Grass}_k(\RR^n)$. Suppose that for every nonzero $v\in \RR^n$ there exists a 
 $2$-homogeneous polynomial $q_v$ on $\largewedge^k v^\perp$ such that $f=q_v$ on $\widetilde{\Grass}_k(v^\perp)\subset \largewedge^k v^\perp$. Then there exists  
  a globally defined $2$-homogeneous polynomial $q$ on $\largewedge^k \RR^n$ such that 
 $f=q$ on $\widetilde{\Grass}_k(\RR^n)\subset \largewedge^k \RR^n$.
\end{proposition}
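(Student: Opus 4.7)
Proof proposal:

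The plan is to use $SO(n)$-representation theory. Let $V\subset C(\widetilde{\Grass}_k(\RR^n))^+$ denote the subspace of even continuous functions satisfying the hypothesis of the proposition, and let $P:=P_k(\RR^n)$ denote the subspace of restrictions of $2$-homogeneous polynomials on $\largewedge^k \RR^n$ to the Plücker image. Clearly $P\subseteq V$, and one checks directly that $V$ is closed (since $2$-homogeneous polynomials on $\largewedge^k v^\perp$ restricted to $\widetilde{\Grass}_k(v^\perp)$ form a finite-dimensional subspace, so uniform limits of elements of $V$ stay in $V$) and $O(n)$-invariant (if $g\in O(n)$ and $f\in V$, then $(g\cdot f)|_{\widetilde{\Grass}_k(v^\perp)}$ is the pushforward of $q_{g^{-1}v}$, again a polynomial restriction). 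Both $V$ and $P$ are therefore direct sums of $SO(n)$-isotypic components in $C(\widetilde{\Grass}_k(\RR^n))^+$, and by Lemma~\ref{lemma:highest_weights_restrictions}, $P$ corresponds precisely to highest weights $(2m_1,\ldots,2m_{\lfloor n/2\rfloor})$ satisfying $|m_1|\leq 1$ and $m_i=0$ for $i>\min(k,n-k)$.

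Using the (multiplicity-free) Strichartz-type description of $C(\widetilde{\Grass}_k(\RR^n))^+$ under $SO(n)$, it suffices to show that every isotypic component $C_\lambda$ appearing in $V$ has $\lambda$ in the allowed set above. For each "bad" $\lambda$, I would exhibit an explicit highest weight vector $f_\lambda\in C_\lambda$ (constructed from determinants of minors in the spirit of \cite{strichartz} as in the proof of Proposition~\ref{prop:characterization}), fix $v_0=e_n$, and analyse the $SO(n-1)$-equivariant restriction $r_{v_0}(f_\lambda)\in C(\widetilde{\Grass}_k(v_0^\perp))^+$. Branching $C_\lambda$ from $SO(n)$ to $SO(n-1)=\operatorname{Stab}(e_n)$ by the standard interlacing rules produces an $SO(n-1)$-irreducible summand whose highest weight $\mu$ lies outside the allowed set of Lemma~\ref{lemma:highest_weights_restrictions} applied in dimension $n-1$ (this is where the constraints $|m_1|\leq 1$ and the vanishing beyond $\min(k,n-k)$ are forced: a bad $\lambda$ must branch to some bad $\mu$). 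If the $\mu$-component of $r_{v_0}(f_\lambda)$ is nonzero, it contradicts $r_{v_0}(f_\lambda)\in P_k(v_0^\perp)$, proving $C_\lambda\not\subseteq V$.

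The main obstacle is verifying the non-vanishing of this bad branching piece: a priori $r_{v_0}$ could kill $f_\lambda$'s $\mu$-component, so one must carry out an explicit evaluation of $f_\lambda$ on a well-chosen family of unit $k$-vectors in $v_0^\perp$ and project onto the $\mu$-isotypic part, in the same spirit as the coefficient-matching calculations in the proof of Proposition~\ref{prop:characterization}. A fallback strategy, if a uniform branching argument proves unwieldy, is to argue by induction: extend $q_{v_0}$ to a global $2$-homogeneous polynomial $Q$ on $\largewedge^k \RR^n$ (using the splitting $\largewedge^k \RR^n=\largewedge^k v_0^\perp \oplus (v_0\wedge \largewedge^{k-1} v_0^\perp)$), replace $f$ by $f-Q|_{\widetilde{\Grass}_k(\RR^n)}$ — which still satisfies the hypothesis and vanishes on $\widetilde{\Grass}_k(v_0^\perp)$ — and iterate, reducing successively to smaller ambient dimension until the case $k=n-2$ is reached, where Proposition~\ref{prop:characterization} applies directly.
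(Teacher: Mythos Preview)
The paper does not actually prove this proposition: it is quoted verbatim from \cite[Proposition~4.4]{w:angularity} and used as a black box, so there is no ``paper's own proof'' to compare against. I can therefore only assess your proposal on its own merits.

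Your main approach (decompose $V$ into $SO(n)$-isotypics and rule out bad highest weights by restricting to $\widetilde{\Grass}_k(e_n^\perp)$ and invoking branching) is a plausible line of attack, and the set-up --- that $V$ is closed, $O(n)$-invariant, and contains $P$ --- is correct. But as you yourself say, the crux is showing that for every bad $\lambda$ the restriction $r_{v_0}(f_\lambda)$ has a nonzero component in some bad $SO(n-1)$-type, and you do not carry this out. For most $\lambda$ the Strichartz highest weight vector involves only the first $2l$ coordinate rows (where $l$ is the number of nonzero parts), so setting the $n$th row to zero leaves the formula unchanged and the argument goes through; but edge cases with $2l=n$ (e.g.\ $n=4$, $\lambda=(2m_1,\pm 2m_2)$ with $m_2\neq 0$) require separate treatment, and you have not addressed them. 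So what you have is an outline, not a proof.

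Your fallback strategy has two genuine gaps. First, after replacing $f$ by $g:=f-Q$, the function $g$ still lives on $\widetilde{\Grass}_k(\RR^n)$; the fact that it vanishes on the single sub-Grassmannian $\widetilde{\Grass}_k(v_0^\perp)$ does not by itself ``reduce the ambient dimension'', and you do not explain what quantity the induction is on or how the inductive step actually proceeds. Second, even granting an induction that bottoms out at ambient dimension $k+2$ (so that $k=n-2$), Proposition~\ref{prop:characterization} does \emph{not} apply: that proposition characterises polynomial restrictions on $\Grass_2(\RR^n)$ by the relation~\eqref{eq:relation}, which is a condition derived from the angularity of a valuation, whereas the hypothesis of Proposition~\ref{prop:polynomial_restriction} is the entirely different condition that every hyperplane restriction be a polynomial. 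You have not shown (and it is not obvious) that the latter implies~\eqref{eq:relation}, so the base case is missing.
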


\begin{corollary}\label{corollary:induction}
  Let  $1\leq k <n-1$ and $f$ be an even continuous function on $\widetilde{\Grass}_k(\RR^n)$. Suppose that for every linear subspace  $E\subset  \RR^n$ of dimension $k+2$ there exists a 
 $2$-homogeneous polynomial $q_E$ on $\largewedge^k E$ such that $f=q_E$ on $\widetilde{\Grass}_k(E)\subset \largewedge^k E$. Then there exists  
  a globally defined $2$-homogeneous polynomial $q$ on $\largewedge^k \RR^n$ such that 
 $f=q$ on $\widetilde{\Grass}_k(\RR^n)\subset \largewedge^k \RR^n$.
\end{corollary}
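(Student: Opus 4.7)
The plan is to iterate Proposition~\ref{prop:polynomial_restriction} one dimension at a time, starting from the $(k+2)$-dimensional hypothesis and climbing up to the full ambient space. The point is that Proposition~\ref{prop:polynomial_restriction} is a purely metric statement about an inner product space of dimension $>k+1$, so it applies not only to $\RR^n$ itself but, verbatim, to every linear subspace $W\subset\RR^n$ of dimension at least $k+2$ equipped with the inherited inner product.

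More precisely, I would prove by induction on $m\in\{k+2,k+3,\ldots,n\}$ the following statement: for every $m$-dimensional linear subspace $W\subset\RR^n$, the restriction $f|_{\widetilde{\Grass}_k(W)}$ agrees on $\widetilde{\Grass}_k(W)$ with the restriction of some $2$-homogeneous polynomial on $\largewedge^k W$. The base case $m=k+2$ is precisely the hypothesis of the corollary. The case $m=n$ is the desired conclusion (if $k+2=n$, which is allowed since we only assume $k<n-1$, the induction is empty and the conclusion is already the base case).

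For the inductive step, fix $m\geq k+3$, assume the statement in dimension $m-1$, and let $W\subset\RR^n$ be an $m$-dimensional subspace. Apply Proposition~\ref{prop:polynomial_restriction} with ambient space $W$; this is permitted since $1\leq k<m-1$. Its hypothesis requires that for every nonzero $v\in W$, the restriction of $f$ to $\widetilde{\Grass}_k(v^\perp\cap W)$ be the restriction of a $2$-homogeneous polynomial on $\largewedge^k(v^\perp\cap W)$. But $v^\perp\cap W$ is a subspace of $\RR^n$ of dimension $m-1\geq k+2$, so the inductive hypothesis supplies exactly such a polynomial. Proposition~\ref{prop:polynomial_restriction}, applied inside $W$, therefore produces a $2$-homogeneous polynomial on $\largewedge^k W$ restricting to $f$ on $\widetilde{\Grass}_k(W)$, completing the inductive step.

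There is no substantive obstacle here; the argument is purely structural and reduces to chasing the dimension condition $k<\dim W-1$ that allows Proposition~\ref{prop:polynomial_restriction} to be invoked at each rung of the induction. The only point worth verifying is that applying Proposition~\ref{prop:polynomial_restriction} inside a proper subspace $W$ is legitimate, which follows because its statement and proof depend only on the inner product structure and on the dimensional constraint just noted.
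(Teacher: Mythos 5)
Your proof is correct and is essentially the paper's argument: the paper also inducts (on the codimension parameter $m=n-k-2$ rather than on $\dim W$, which is the same induction read in the other direction), with the base case $n=k+2$ given by hypothesis and the inductive step supplied by Proposition~\ref{prop:polynomial_restriction} applied to the hyperplane data coming from the inductive hypothesis. Your explicit remark that the proposition may be invoked inside a proper subspace is the same (implicit) point the paper relies on.
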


\begin{proof} Put $m=n-k-2$. If $m=0$, then there is nothing to prove. Assume now that $m>0$ and that the theorem has been proved for $m-1$. Then for every nonzero vector $v\in \RR^n$ there exists a 
 $2$-homogeneous polynomial $q_v$ on $\largewedge^k v^\perp$ such that $f=q_v$ on $\widetilde{\Grass}_k(v^\perp)\subset \largewedge^k v^\perp$. By Proposition~\ref{prop:polynomial_restriction} this implies the existence of 
  a globally defined $2$-homogeneous polynomial $q$ on $\largewedge^k \RR^n$ such that 
 $f=q$ on $\widetilde{\Grass}_k(\RR^n)\subset \largewedge^k \RR^n$. 
\end{proof}

\begin{proof}[Proof of Theorem~\ref{thm:main}] 
If $\mu=\mu_{\pi^*f}$  admits a continuous extension to  convex bodies then $\mu\in A_k$.  
For every linear subspace $E\subset \RR^n$ the restriction $\mu|_E$ of $\mu$ to $E$ is according to Lemma~\ref{lemma:res}  angular. The Klain function of $\mu|_E$ is the restriction of the Klain function of $\mu$ to $\Grass_k(E)$. If the dimension of $E$ is $k+2$, then $f|_{\Grass_k(E)}=\Klain_{\mu|_E}$ is by Corollary~\ref{cor:codegree2} the restriction of a $2$-homogeneous polynomial on $\largewedge^k E$   to the image of the Pl\"ucker embedding. By  Corollary~\ref{corollary:induction}, $f$ is the restriction of $2$-homogeneous polynomial on $\largewedge^k \RR^n$ to the image of the Pl\"ucker embedding. 
 
The converse follows from the fact proved in \cite[Theorem 1.4]{w:angularity} that to each 
restriction of a $2$-homogeneous polynomial  to the image of the Pl\"ucker embedding corresponds a constant coefficient curvature measure. Globalizing this curvature measure we get an element of $A_{k}$. That   $A_k$ has the stated dimension is now immediate from  Lemma~\ref{lemma:highest_weights_restrictions}. 
\end{proof}

\bibliographystyle{abbrv}
\bibliography{ref_convex,ref_integral}
\end{document}